\begin{document}

\title{Fitting an ellipsoid to a quadratic number of random points}
\date{\today}
\author{Afonso S.\ Bandeira, Antoine Maillard, Shahar Mendelson, Elliot Paquette}
\maketitle

\begin{abstract}
    We consider the problem $(\rm P)$ of fitting $n$ standard Gaussian random vectors in $\bbR^d$ to the boundary of a centered ellipsoid, as $n, d \to \infty$.
    This problem is conjectured to have a sharp feasibility transition: for any $\eps > 0$, if $n \leq (1 - \eps) d^2 / 4$ then $(\rm P)$ has a solution with high probability, while $(\rm P)$ has no solutions with high probability if
    $n \geq (1 + \eps) d^2 /4$.
    So far, only a trivial bound $n~\geq~d^2 / 2$ is known on the negative side, while the best results on the positive side assume $n \leq d^2 / \plog(d)$.
    In this work, we improve over previous approaches using a key result of Bartl~\&~Mendelson (2022) on the concentration of Gram matrices of random vectors under mild assumptions on their tail behavior.
    This allows us to give a simple proof that $(\rm P)$ is feasible with high probability when $n \leq d^2 / C$, for a (possibly large) constant $C > 0$.
\end{abstract}

\section{Introduction}\label{sec:introduction}
\noindent
We study the following question: given $n$ vectors in $\bbR^d$ independently sampled from the standard Gaussian measure, when does there exist an ellipsoid centered at $0$ 
whose boundary goes through all the vectors? 
This question was raised by \cite{saunderson2011subspace,saunderson2012diagonal,saunderson2013diagonal}, and has received 
significant attention recently \cite{venkat2022near,kane2022nearly,hsieh2023ellipsoid}. We will discuss the motivations behind this problem and review some recent literature in Section~\ref{subsec:motivation}.
In the original series of work of Saunderson\&al \cite{saunderson2011subspace,saunderson2012diagonal,saunderson2013diagonal}, it was conjectured based on numerical experiments that the ellipsoid fitting property undergoes a phase transition in the limit $d \to \infty$ for $n \sim d^2 / 4$. 
Notably, the threshold $d^2/4$ corresponds to the statistical dimension of the cone of positive semidefinite matrices \cite{gordon1988milman,amelunxen2014living} (see \cite{venkat2022near} for a discussion).

\begin{conjecture}[The ellipsoid fitting conjecture]\label{conj:main}
    \noindent
    Let $n, d \geq 1$, and $x_1, \cdots, x_n \iid \mcN(0, \Id_d/d)$.
    Let $p(n, d)$ be defined as the probability of existence of a fitting ellipsoid centered in $0$:
    \begin{equation*}
        p(n, d) \coloneqq \bbP\Big[\exists \Sigma \in \mcS_d \, : \, \Sigma \succeq 0 \quad \textrm{and} \quad x_i^\T \Sigma x_i = 1 \quad (\forall i \in [n])\Big].
    \end{equation*}
    For any $\eps > 0$, the following holds:
    \begin{equation*}
        \begin{dcases}
            \limsup_{d \to \infty} \frac{n}{d^2} \leq \frac{1 - \eps}{4} &\Rightarrow \lim_{d \to \infty} p(n, d) = 1, \\ 
            \liminf_{d \to \infty} \frac{n}{d^2} \geq \frac{1 + \eps}{4} &\Rightarrow \lim_{d \to \infty} p(n, d) = 0. 
        \end{dcases}
    \end{equation*}
\end{conjecture}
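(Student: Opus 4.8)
\medskip
\noindent\textbf{Reformulation.} We outline a strategy; via semidefinite duality both directions reduce to the same (currently open) universality statement about random conic sections, so the emphasis below is on where the sharp constant must appear and on the principal obstructions. Set $X_i \coloneqq x_i x_i^\T \in \mcS_d$ and let $A : \mcS_d \to \bbR^n$ be given by $(A\Sigma)_i = \langle \Sigma, X_i\rangle = x_i^\T \Sigma x_i$, so that $(\rm P)$ asks whether the affine subspace $\{\Sigma : A\Sigma = \mathbf 1\}$, a translate of $\operatorname{span}\{x_i x_i^\T\}^\perp$, meets the PSD cone. By the theorem of the alternative for semidefinite feasibility, for generic $x_1,\dots,x_n$ exactly one of the following holds: either there is a \emph{primal certificate} $\Sigma \succeq 0$ with $A\Sigma = \mathbf 1$, or there is a \emph{dual certificate} $y \in \bbR^n$ with $\sum_i y_i = 1$ and $\sum_i y_i\,x_i x_i^\T \preceq 0$. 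The two halves of Conjecture~\ref{conj:main} are thus: (i) for $n \le (1-\eps)d^2/4$ a primal certificate exists with high probability; (ii) for $n \ge (1+\eps)d^2/4$ a dual certificate exists with high probability. Both say that the structured ensemble $\{x_i x_i^\T\}$ interacts with the PSD cone exactly as a Haar-random subspace of equal dimension would, the transition sitting at the statistical dimension $\sim d^2/4$ of the PSD cone; the whole difficulty is to establish this universality \emph{with the sharp constant}.

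\smallskip
\noindent\textbf{Feasible side (i).} Take the minimal-norm ansatz $\Sigma = \Sigma_0 + A^* w$ with $w = (AA^*)^{-1}(\mathbf 1 - A\Sigma_0)$, where $\Sigma_0$ is a base point near $\Id_d$ corrected so as to absorb the $O(d^{-1/2})$ fluctuations of the $\|x_i\|^2$; then $A\Sigma = \mathbf 1$ automatically and positivity reduces to $\lambda_{\min}\!\big(\Sigma_0 + \sum_i w_i x_i x_i^\T\big) \ge 0$. Operator-norm bounds on $\sum_i w_i x_i x_i^\T$ --- in essence the route giving $n \le d^2/C$ here, built on the Bartl~\&~Mendelson concentration of the Gram matrix $AA^*$ whose entries are $\langle x_i, x_j\rangle^2$ --- cannot reach the constant $1/4$; for that one must locate the exact limiting spectral edge of $\Sigma_0 + A^* w$ as $n/d^2 \uparrow 1/4$. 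Since $AA^*$ is the Gram matrix of the tensors $x_i \otimes x_i$, this calls for a deterministic-equivalent / free-probability analysis of $AA^*$ and of the quadratic forms $\langle x_i \otimes x_i,\,(AA^*)^{-1}(x_j \otimes x_j)\rangle$, which should show $\lambda_{\min}(\Sigma_0 + A^* w)$ converging to an explicit decreasing function of $\alpha = n/d^2$ that remains nonnegative precisely for $\alpha < 1/4$.

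\smallskip
\noindent\textbf{Infeasible side (ii).} Here one must \emph{construct} a dual certificate, i.e.\ show that the random affine subspace $\mathcal W \coloneqq \{\sum_i y_i x_i x_i^\T : \sum_i y_i = 1\}$ --- of dimension $n-1$, hence of codimension $\approx d^2/2 - n$ in $\mcS_d$, so that the Haar heuristic already predicts the transition at $n \sim d^2/4$ --- contains a negative semidefinite matrix once $n > (1+\eps)d^2/4$; below the trivial bound $n \ge d^2/2$ essentially nothing is known. Two complementary routes suggest themselves. First, a planted second-moment argument: sample negative semidefinite matrices conditioned to lie in $\mathcal W$ (e.g.\ suitably scaled Wishart matrices), let $Z$ count those that are in fact $\preceq 0$, and show $\bbE[Z]^2/\bbE[Z^2] = \Omega(1)$ --- the dual mirror of the planting arguments used on the feasible side. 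Second, a Gaussian min--max evaluation of $\min\{\lambda_{\max}(\sum_i y_i x_i x_i^\T) : \sum_i y_i = 1\}$, whose limit --- once the quadratic dependence on the $x_i$ is dealt with, say by linearizing $x_i x_i^\T$ through an auxiliary Gaussian field or by interpolation / operator-valued free probability (the Convex Gaussian Min-max Theorem not applying to quadratic forms directly) --- should be an explicit function of $\alpha$ that turns $\le 0$ exactly for $\alpha > 1/4$.

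\smallskip
\noindent\textbf{Main obstacle.} On both sides the crux is identical: promoting the Haar conic-geometry prediction to the rank-one, tensor-structured ensemble $\{x_i x_i^\T\}$ \emph{with matching constants}. On the feasible side this is the exact computation of the spectral edge of $A^*(AA^*)^{-1}(\mathbf 1 - A\Sigma_0)$ as $n/d^2 \uparrow 1/4$, with the dependence between $AA^*$ and the individual $x_i x_i^\T$ fully tracked; on the infeasible side it is the production of \emph{any} dual certificate in the window $d^2/4 \le n < d^2/2$, for which no construction is presently known. I expect the infeasible direction to be the harder and more open-ended of the two.
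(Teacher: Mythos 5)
The statement you are addressing is the ellipsoid fitting conjecture itself, which remains open; the paper does not prove it, and your submission does not either. What you have written is a research program, not a proof: on the feasible side the decisive step (``locate the exact limiting spectral edge \dots which should show $\lambda_{\min}$ \dots remains nonnegative precisely for $\alpha<1/4$'') is asserted as a hope, and on the infeasible side you yourself note that no construction of a dual certificate is known in the window $d^2/4 \lesssim n \lesssim d^2/2$. Both of these are exactly the open problems, so the gap is the entire content of the conjecture. For comparison, the paper only establishes the much weaker Theorem~\ref{thm:main} (feasibility for $n \le \alpha d^2$ with an unspecified constant $\alpha$), via the identity perturbation ansatz $\Sigma = \Id_d + \sum_i q_i x_i x_i^\T$ combined with the Bartl--Mendelson concentration result for the kernel Gram matrix $\Theta_{ij} = \langle \omega_i,\omega_j\rangle^2$; duality plays no role in that proof, and the dual statement appears only as Corollary~\ref{cor:dual}, derived \emph{from} Theorem~\ref{thm:main} by weak duality plus a limiting argument (note that your clean ``exactly one of the two certificates exists'' alternative requires regularity/closedness conditions that must be checked; the paper avoids this by working with a strict inequality and a compactness argument).

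Beyond incompleteness, one element of your feasible-side plan appears to be headed in a direction that cannot reach the sharp constant: your minimal-norm ansatz $\Sigma = \Sigma_0 + A^*(AA^*)^{-1}(\mathbf 1 - A\Sigma_0)$ is essentially the least-squares construction of \cite{venkat2022near} (a close cousin of the identity perturbation construction used here and in \cite{kane2022nearly}), and the numerical evidence reported in \cite{venkat2022near} and recalled in Section~\ref{subsec:motivation} indicates that both constructions stop producing PSD solutions already around $n \approx d^2/10$, well below $d^2/4$. So even a complete deterministic-equivalent computation of the spectral edge of your ansatz would most likely show that \emph{this particular} $\Sigma$ fails before $\alpha = 1/4$; proving the conjecture up to the constant $1/4$ is expected to require a genuinely different construction (or a non-constructive argument), not merely sharper spectral analysis of the least-squares/identity-perturbation solution. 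Your diagnosis of the infeasible side (that nothing beyond the dimension-counting bound $n \le d(d+1)/2$ is known, and that a second-moment or Gaussian-comparison route faces the obstruction that the dependence on the $x_i$ is quadratic) is accurate, but again it is a statement of the difficulty rather than a step toward resolving it.
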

\noindent
Our main result gives a positive answer to the existence statement of Conjecture~\ref{conj:main}, 
up to a constant factor in $n/d^2$. We present its proof in Section~\ref{sec:proof}.
\begin{theorem}[Ellipsoid fitting up to a constant]\label{thm:main}
    \noindent
    Let $n, d \geq 1$, and $x_1, \cdots, x_n \iid \mcN(0, \Id_d/d)$.
    Given any $\beta \geq 1$,
    there exist a (small) constant $\alpha = \alpha(\beta) > 0$ and a (large) constant $C = C(\beta) > 0$ such that for $n \leq \alpha d^2$:
    \begin{equation*}
        \bbP[\exists \Sigma \in \mcS_d \, : \, \Sigma \succeq 0 \quad \textrm{and} \quad x_i^\T \Sigma x_i = 1 \quad (\forall i \in [n])] \geq 1 - C n^{-\beta}.
    \end{equation*}
\end{theorem}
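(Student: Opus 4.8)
The plan is to seek a solution of the form $\Sigma = \Id_d + M$ with $M \in \mcS_d$ and $\|M\|_{\mathrm{op}} \le 1/2$, which forces $\Sigma \succeq \tfrac12\Id_d \succ 0$. Let $\mathcal{A}\colon \mcS_d \to \bbR^n$ be the linear map $\mathcal{A}(M)_i \coloneqq x_i^\T M x_i$; its adjoint is $\mathcal{A}^*(y) = \sum_{i=1}^n y_i\, x_i x_i^\T$, and $G \coloneqq \mathcal{A}\mathcal{A}^*$ is the $n\times n$ Gram matrix of the lifted vectors $x_i x_i^\T$, i.e.\ $G_{ij} = (x_i^\T x_j)^2$. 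The constraints $x_i^\T \Sigma x_i = 1$ read $\mathcal{A}(M) = b$ with $b_i \coloneqq 1 - \|x_i\|^2$, and whenever $G$ is invertible the choice $M \coloneqq \mathcal{A}^*(G^{-1}b)$ is a solution. So it suffices to prove, with probability at least $1 - C n^{-\beta}$: (i) $\lambda_{\min}(G) \ge 1/2$; and (ii) $\|M\|_{\mathrm{op}} \le 1/2$. Each deviation event invoked below has (stretched-)exponentially small failure probability in $d$, hence probability $\ge 1 - C(\beta) n^{-\beta}$ once $d \ge d_0(\beta)$, and the finitely many smaller $d$ are absorbed into $C(\beta)$.

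For (i), center the lifts: $\tilde v_i \coloneqq x_i x_i^\T - \tfrac1d \Id_d \in \mcS_d$. A short Gaussian (Wick) computation shows that the $\tilde v_i$ are i.i.d., mean zero, with covariance operator \emph{exactly} $\tfrac{2}{d^2}$ times the identity on $\mcS_d$; in particular $\bbE\|\tilde v_i\|_F^2 = 1 + \tfrac1d$, and every linear marginal $\langle \tilde v_i, Q\rangle_F$, $\|Q\|_F = 1$, is a mean-zero degree-two Gaussian chaos and so has sub-exponential ($\psi_1$) tails. Since $\dim \mcS_d = \binom{d+1}{2} \gg n$ whenever $n \le \alpha d^2$ with $\alpha$ small, the Bartl--Mendelson concentration bound for Gram matrices of random vectors with mild tail behavior applies to $\tilde G_{ij} \coloneqq \langle \tilde v_i, \tilde v_j\rangle_F$ and yields $\|\tilde G - (1+\tfrac1d)\Id_n\|_{\mathrm{op}} \le C_0\sqrt{\alpha}$ with the required probability. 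Expanding the inner products, $G = \tilde G + \tfrac1d u u^\T - \tfrac1d w w^\T$ with $u_i \coloneqq \|x_i\|^2$ and $w \coloneqq u - \mathbf 1 = -b$; here $\tfrac1d u u^\T \succeq 0$, while an elementary $\chi^2$ bound gives $\tfrac1d\|w\|^2 \le C' n/d^2 \le C'\alpha$. Hence $\lambda_{\min}(G) \ge (1+\tfrac1d) - C_0\sqrt{\alpha} - C'\alpha \ge \tfrac12$ once $\alpha$ is small enough.

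Step (ii) is the heart of the argument. Decompose $M = \mathcal{A}^*(b) + \mathcal{A}^*\!\big((G^{-1}-\Id_n)b\big) = M^\star + R$ with $M^\star \coloneqq \mathcal{A}^*(b) = \sum_{i=1}^n (1-\|x_i\|^2)\, x_i x_i^\T$. The correction $R$ is controlled using the structure of $G$ from step (i): writing $G - \Id_n = \big(\tilde G - (1+\tfrac1d)\Id_n\big) + \tfrac1d\Id_n + \tfrac1d(uu^\T - ww^\T)$, the operator $G^{-1}$ strongly contracts the positive rank-one term $\tfrac1d u u^\T$ along $u \approx \mathbf 1$ --- a direction to which $b = \mathbf 1 - u$ is nearly orthogonal --- and one obtains $\|R\|_{\mathrm{op}} \le C\sqrt{\alpha} + o(1)$. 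The one genuinely delicate point here is to avoid the naive estimate $\|\mathcal{A}^*(v)\|_{\mathrm{op}} \le \|v\|_2 \sup_{\theta \in S^{d-1}}\|\mathcal{A}(\theta\theta^\T)\|_2$, which loses a factor of order $\sqrt d$ (because $\|b\|_2 \asymp \sqrt{n/d}$ grows with $d$), and to exploit the above near-orthogonalities instead. It then remains to bound $\|M^\star\|_{\mathrm{op}}$. Here $\bbE[M^\star] = -\tfrac{2n}{d^2}\Id_d$ (another Wick computation), so $\|\bbE M^\star\|_{\mathrm{op}} \le 2\alpha$; and $M^\star - \bbE M^\star = \sum_i \xi_i$ with $\xi_i \coloneqq (1-\|x_i\|^2)\, x_i x_i^\T + \tfrac{2}{d^2}\Id_d$ independent, mean zero, and matrix variance $\big\|\sum_i \bbE \xi_i^2\big\|_{\mathrm{op}} = O(n/d^2) = O(\alpha)$. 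One would like to conclude $\|\sum_i \xi_i\|_{\mathrm{op}} = O(\sqrt\alpha)$, which would finish the proof; but matrix Bernstein and non-commutative Khintchine only give $O(\sqrt{\alpha \log d})$, which is worthless here since we need an absolute constant strictly below $1/2$. Removing this spurious logarithm --- equivalently, establishing $\|M^\star\|_{\mathrm{op}} \le C\sqrt{n}/d$, or the uniform-in-$\theta$ bound $\sup_{\theta \in S^{d-1}} \big|\sum_i (1-\|x_i\|^2)(x_i^\T\theta)^2 + \tfrac{2n}{d^2}\big| \le C\sqrt{n}/d$ --- is precisely the second-order behavior of structured, lifted-Gaussian matrix sums that the Bartl--Mendelson circle of ideas is built to capture. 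This is the main obstacle; everything else is bookkeeping.

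Combining the steps, $\|M\|_{\mathrm{op}} \le \|M^\star\|_{\mathrm{op}} + \|R\|_{\mathrm{op}} \le C\sqrt\alpha + o(1) < \tfrac12$ once $\alpha = \alpha(\beta)$ is chosen small and $d \ge d_0(\beta)$, which gives $\Sigma = \Id_d + M \succeq \tfrac12\Id_d$ with $x_i^\T\Sigma x_i = 1$ for all $i \in [n]$, as required.
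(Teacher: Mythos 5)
You have rediscovered exactly the paper's construction: your $M=\mathcal{A}^*(G^{-1}b)$ is the identity perturbation ansatz $\Sigma=\Id_d+\sum_i q_i x_ix_i^\T$ with $q=G^{-1}b$ (indeed $G=D\Theta D$ and $b=\ones_n-D\ones_n$ in the paper's notation, so your $q$ coincides with $D^{-1}\Theta^{-1}(D^{-1}\ones_n-\ones_n)$), and your step (i) is the paper's Lemma on the kernel Gram matrix: both invoke Bartl--Mendelson on the flattened, centered rank-one lifts (you work with the Gaussian lifts directly, the paper normalizes to the sphere first so that condition $(ii)$ of well-behavedness holds deterministically with $\delta=3/d$ rather than with a fluctuating $\|x_i\|^4$; this is a minor variation and your Wick computation of the covariance is correct). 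So far the proposal is sound and parallel to the paper.

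The gap is step (ii), and you have flagged it yourself: you do not prove the operator norm bound, you only name it. Two assertions are left unsubstantiated. First, $\|M^\star\|_\op\le C\sqrt{n}/d$, equivalently $\sup_{a\in\mcS^{d-1}}|\sum_i(1-\|x_i\|^2)\langle x_i,a\rangle^2+2n/d^2|\lesssim\sqrt{n}/d$: as you correctly observe, matrix Bernstein loses a $\sqrt{\log d}$ that is fatal at $n\asymp d^2$, but the claim that ``the Bartl--Mendelson circle of ideas is built to capture'' this is not a proof and is not even the right tool as stated --- their theorem controls the \emph{unsigned} quadratic forms $\sum_i\langle X_i,t\rangle^2$ (a Gram matrix), whereas here the summands carry random, signed, mean-zero weights of size $\asymp d^{-1/2}$, and no result quoted in your argument delivers the signed version. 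Second, $\|R\|_\op\le C\sqrt{\alpha}+o(1)$ is asserted via ``exploit the near-orthogonalities,'' with no argument; the naive bound you rightly reject is the only one actually available from your step (i). The paper's resolution of precisely this obstacle occupies its Sections 2.3--2.5 and is genuinely different in mechanism: it does not split $M$ into $M^\star+R$ but controls the full weighted sum $\sum_i[\Theta^{-1}\tilde q]_i\langle\omega_i,a\rangle^2$ at once, exploiting that the radial weights $\tilde q_i=1/\|x_i\|^2-1$ are independent of the directions $\omega_i$; it truncates and centers $\tilde q$ into $K/\sqrt d$-sub-Gaussian variables $y_i$, passes to a $(1/2)$-net, splits each point's sum according to whether $|\langle\omega_i,a\rangle|>\eta$, handles the few large coordinates via an $\ell_\infty$ bound on $\Theta^{-1}y$ together with a Poisson-domination count of $|S(\eta)|$, and handles the remaining coordinates via Hoeffding conditional on $\{\omega_i\}$ plus a sub-Weibull (Bernstein-type, $\psi_q$ with $q\in(1/2,3/4)$) bound on $\sum_{i\notin S(\eta)}\langle\omega_i,a\rangle^4$. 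None of this machinery, or a substitute for it, appears in your proposal, so the central estimate of the theorem remains unproved.
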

\noindent
\textbf{From polynomial to exponential probability bounds --} 
While we show a polynomial lower bound on the probability, as we will notice during the detailing of the proof, 
we believe that such a lower bound can be improved to an exponential lower bound of the type $1 - 2 \exp(-Cd)$, 
for $n \leq \alpha d^2$ and a universal constant $\alpha > 0$. 
We highlight the principles of this improvement in the proof, and detail how it would require a slightly deeper dive into the arguments of the proof of the main result of \cite{bartl2022random}. Since the 
main conjecture of ellipsoid fitting only concerns the limit of the probability and not its scaling, we leave this improvement for future work, 
and will sometimes use probability estimates that are not the sharpest possible, but are sufficient for our goal.

\subsection{Motivation and related literature}\label{subsec:motivation}

We give here a brief overview of the motivations to consider the ellipsoid fitting problem, as well as previous results on this conjecture. 

\myskip
Despite the fact that Conjecture~\ref{conj:main} remains open, the ellipsoid fitting property is a natural question in random geometry.
Notably, if the vectors $x_1, \cdots, x_n$ satisfy this property, then there is no vector~$x_i$ lying in the interior of the convex hull of the other vectors $(\pm x_j)_{j \neq i}$. Moreover, this problem has several connections with machine learning and theoretical computer science, which motivated its introduction. Examples of these connections include the decomposition of a data matrix into a sum of diagonal and low-rank components \cite{saunderson2011subspace,saunderson2012diagonal,saunderson2013diagonal}, overcomplete independent component analysis \cite{podosinnikova2019overcomplete}, 
or the discrepancy of random matrices \cite{saunderson2012diagonal,venkat2022near}. 
Relations to these various problems are discussed more extensively in the introduction of \cite{venkat2022near}, to which we refer the interested reader for more details.

\myskip
\textbf{The negative side of the conjecture --}
A dimension counting argument shows that ellipsoid fitting is generically not possible if $n > d (d+1)/2$, implying that the negative part of Conjecture~\ref{conj:main} is non-trivial only in the range $d^2/4 \lesssim n \lesssim d^2/2$. Despite the simplicity of this argument, $d^2/2$ is still the best-known bound on the negative side of Conjecture~\ref{conj:main}.

\myskip
\textbf{Early results --} In the original works that introduced the ellipsoid fitting conjecture \cite{saunderson2011subspace,saunderson2013diagonal}, it was proven that ellipsoid fitting is feasible with high probability if $n \lesssim \mcO(d^{6/5 - \eps})$ (for any $\eps > 0$). This bound was improved to $n \lesssim \mcO(d^{3/2 - \eps})$ in \cite{ghosh2020sum}, where the result was obtained as a corollary of the proof of a Sum-of-Squares lower bound for the Sherrington-Kirkpatrick Hamiltonian of statistical physics\footnote{In the revised version of \cite{venkat2022near}, as well as in \cite{hsieh2023ellipsoid}, it was noticed that the results of \cite{ghosh2020sum} actually hold for $n \lesssim \mcO(d^2 / \plog(d))$.}, using a pseudo-calibration construction.

\myskip
\textbf{Comparison with recent work --}
Our proof is based on an ``identity perturbation'' construction, an idea which 
was described in \cite{venkat2022near}, and used in \cite{kane2022nearly} to prove that $p(n, d) \to 1$
under the assumption that $n = \mcO(d^2 / \plog(d))$.
On the other hand, \cite{venkat2022near} uses a least-square construction to prove that ellipsoid fitting is possible with high probability 
under the similar condition $n = \mcO(d^2 / \plog(d))$\footnote{We note that \cite{venkat2022near} was recently updated to present an alternative proof through the identity perturbation construction, 
again under the assumption $n = \mcO(d^2 / \plog(d))$.}.

\myskip
Our proof follows in part the one of \cite{kane2022nearly}, improving a crucial operator norm bound thanks to results of \cite{bartl2022random}. As mentioned in \cite{kane2022nearly}, using a suboptimal bound on this operator norm was the main limitation that prevented the authors to prove the existence of a fitting ellipsoid for $n \leq d^2 / C$. 
We emphasize that numerical studies \cite{venkat2022near} suggested that the identity perturbation construction is successful only in the range $n \lesssim d^2/10$, so in order to resolve Conjecture~\ref{conj:main} (or even just the existence part) it appears a new idea is needed\footnote{Numerical simulations of \cite{venkat2022near} suggest the least-squares approach suffers from the same shortcomings.}. 

\myskip 
\textbf{Parallel work --}
As we were finalizing the current manuscript, another proof that ellipsoid fitting is possible at a quadratic number of points was proposed~\cite{hsieh2023ellipsoid}.
Like our approach, the proof in~\cite{hsieh2023ellipsoid} is based on the identity perturbation construction, but the proof techniques appear to us to be quite different:
\cite{hsieh2023ellipsoid} relies on the theory of graph matrices, and as such strengthens similar arguments presented in~\cite{venkat2022near} (while our proof can instead be viewed as a strengthening of the arguments in~\cite{kane2022nearly}).
Finally, shortly after the present work appeared online, a third proof was proposed in the independent work~\cite{tulsiani2023ellipsoid}.

\myskip
More specifically, our approach
relies on obtaining a crucial bound on the operator norm of a kernel Gram matrix by mapping it to the Gram matrix of flattened rank-one matrices,
and using the results of \cite{bartl2022random}. This latter work showed the concentration of the Gram matrix of i.i.d.\ vectors $X_1, \cdots, X_n$ under 
the assumption that the first moments of the projections $\langle X, u\rangle$ satisfy (uniformly in $u$) a $\psi_\alpha$-like tail bound for some $\alpha \in (0, 2]$.

\subsection{The dual semidefinite program}\label{subsec:dual}

Note that ellipsoid fitting belongs to the class of random semidefinite programs, and as such admits a dual formulation. As we find the dual problem to have a particularly interesting formulation we include a short
expository snippet to highlight this dual SDP, and the consequences of Theorem~\ref{thm:main} for it. Namely, it implies the following corollary.
\begin{corollary}[Dual problem]\label{cor:dual}
    \noindent
    Let $n, d \geq 1$, and $x_1, \cdots, x_n \iid \mcN(0, \Id_d/d)$.
    Given any $\beta \geq 1$,
    there exist a (small) constant $\alpha = \alpha(\beta) > 0$ and a (large) constant $C = C(\beta) > 0$ such that for $n\leq \alpha d^2$:
    \begin{equation*}
        \bbP\Bigg[\exists z \in \bbR^n \, : \, \sum_{i=1}^n z_i = 0 \textrm{ and } \lambda_{\max}\Bigg(\sum_{i=1}^n z_i x_i x_i^\T\Bigg) < 0 \Bigg] \leq C n^{-\beta}.
    \end{equation*}
\end{corollary}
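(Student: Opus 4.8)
The plan is to read off Corollary~\ref{cor:dual} from Theorem~\ref{thm:main} using only the ``easy'' (weak-duality) half of the Farkas-type dichotomy for semidefinite feasibility; no further randomness or concentration estimate is needed. The core is a deterministic statement: for \emph{every} fixed configuration $x_1, \dots, x_n \in \bbR^d$, the primal event
\[
E_{\mathrm{prim}} \coloneqq \big\{\exists\, \Sigma \in \mcS_d : \Sigma \succeq 0,\ x_i^\T \Sigma x_i = 1\ \ (\forall i \in [n])\big\}
\]
and the dual event
\[
E_{\mathrm{dual}} \coloneqq \Big\{\exists\, z \in \bbR^n : \textstyle\sum_{i} z_i = 0,\ \lambda_{\max}\big(\textstyle\sum_i z_i x_i x_i^\T\big) < 0\Big\}
\]
are disjoint. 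Granting this, the corollary is immediate: $E_{\mathrm{dual}} \subseteq E_{\mathrm{prim}}^c$, so $\bbP[E_{\mathrm{dual}}] \leq 1 - \bbP[E_{\mathrm{prim}}] \leq C n^{-\beta}$ by Theorem~\ref{thm:main}, with the same constants $\alpha(\beta), C(\beta)$ and in the same regime $n \leq \alpha d^2$.

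To prove the disjointness I would argue by contradiction: suppose both hold, and pick a witness $\Sigma \succeq 0$ with $x_i^\T \Sigma x_i = 1$ for all $i$, together with $z$ such that $\sum_i z_i = 0$ and $M \coloneqq \sum_i z_i x_i x_i^\T \prec 0$. Since $M \prec 0$ there is $c > 0$ with $M \preceq -c\,\Id_d$, hence $-c\,\Id_d - M \succeq 0$ and therefore $\mathrm{Tr}\big(\Sigma(-c\,\Id_d - M)\big) \geq 0$, i.e.\ $\mathrm{Tr}(\Sigma M) \leq -c\,\mathrm{Tr}(\Sigma)$. On the other hand, pairing the affine constraints against $z$ and using $\sum_i z_i = 0$,
\[
\mathrm{Tr}(\Sigma M) = \sum_{i=1}^n z_i\, \mathrm{Tr}(\Sigma x_i x_i^\T) = \sum_{i=1}^n z_i\, (x_i^\T \Sigma x_i) = \sum_{i=1}^n z_i = 0 .
\]
Combining the two gives $\mathrm{Tr}(\Sigma) \leq 0$, which together with $\Sigma \succeq 0$ forces $\Sigma = 0$, contradicting $x_1^\T \Sigma x_1 = 1$. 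This closes the argument.

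There is essentially no genuine obstacle here: the corollary follows from Theorem~\ref{thm:main} by one line of convex-duality bookkeeping, and only the implication ``$E_{\mathrm{prim}}$ holds $\Rightarrow$ no dual certificate exists'' is used, which needs no constraint qualification. The only point to handle with a little care is the sign and strictness convention: it is precisely the strict inequality $\lambda_{\max}(\sum_i z_i x_i x_i^\T) < 0$ (rather than $\le 0$), together with the normalization $\sum_i z_i = 0$, that lets the pairing computation conclude $\Sigma = 0$ without any nondegeneracy hypothesis on the $x_i$. The genuinely delicate direction — that infeasibility of the primal \emph{produces} such a $z$, giving a true dichotomy — would require controlling the closure of the cone $\{(x_i^\T \Sigma x_i)_{i \in [n]} : \Sigma \in \mcS_d,\ \Sigma \succeq 0\} \subseteq \bbR^n$ (or verifying strict feasibility of one side), but this is not needed for the statement as written, so I would not pursue it.
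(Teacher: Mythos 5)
Your proposal is correct, and it reaches the conclusion by a route that is the same in spirit (weak duality) but different in execution from the paper's. The paper invokes the formal SDP dual of the fitting problem, concludes from Theorem~\ref{thm:main} that $\max\{\sum_i y_i : \sum_i y_i x_ix_i^\T \preceq 0\} = 0$ on the high-probability event, and then must handle the strict inequality in the dual event by a perturbation $y_i(\eps) = z_i + \eps$ followed by extraction of a convergent subsequence of unit vectors as $\eps \to 0$. You instead prove directly that the primal and dual events are deterministically disjoint, via the pairing $\mathrm{Tr}(\Sigma M) = \sum_i z_i x_i^\T \Sigma x_i = \sum_i z_i = 0$ together with the quantitative slack $M \preceq -c\,\Id_d$, $c = -\lambda_{\max}(M) > 0$, which forces $\mathrm{Tr}(\Sigma) \le 0$ and hence $\Sigma = 0$, contradicting $x_1^\T \Sigma x_1 = 1$. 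This is sound: the trace of a product of two positive semidefinite matrices is nonnegative, and $\Sigma \succeq 0$ with $\mathrm{Tr}(\Sigma)=0$ indeed gives $\Sigma = 0$. What your version buys is the elimination of both the black-box appeal to SDP duality and the compactness/limiting argument --- the strictness of $\lambda_{\max}(M) < 0$ is consumed at the start (to produce $c>0$) rather than recovered at the end by a limit. Your closing remark is also accurate: only the easy direction of the dichotomy is needed, so no constraint qualification or closedness of the image cone enters. The probabilistic bookkeeping (containment of the dual event in the complement of the primal event, then Theorem~\ref{thm:main}) is exactly as in the paper.
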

\noindent 
Corollary~\ref{cor:dual} rewrites ellipsoid fitting as a problem of ``balancing'' rank-one matrices: we show that for $n \leq \alpha d^2$ it is impossible to find a centered balancing of $(x_i x_i^\T)$ such that the resulting matrix is negative definite (nor positive definite as one can always consider $-z$). We note however that duality doesn't play any explicit role in the proof of Theorem~\ref{thm:main}.

\myskip
\begin{proof}[Proof of Corollary~\ref{cor:dual} --]
Ellipsoid fitting is a semidefinite program, which we can write in the canonical form
\begin{equation*}
    \min_{\substack{\Sigma \succeq 0 \\ \Tr[\Sigma x_i x_i^\T] = 1}} \Tr[A S] \in \{0, +\infty\},
\end{equation*}
with $A = 0$. 
By weak duality and Theorem~\ref{thm:main}, with probability at least $1 - Cn^{-\beta}$ for $n \leq \alpha(\beta) d^2$, 
its dual semidefinite program satisfies:
\begin{equation*}
    \max_{\substack{y \in \bbR^n \\ \sum_{i=1}^n y_i x_i x_i^\T \preceq \, 0}} \sum_{i=1}^n y_i = 0. 
\end{equation*}
We now condition on this event.
Thus for all $y \in \bbR^n$, if $\sum y_i > 0$ then $\lambda_{\max}(\sum_{i=1}^n y_i x_i x_i^\T) > 0$.
Let $z \in \bbR^n$ such that $\sum_{i=1}^n z_i = 0$. To prove Corollary~\ref{cor:dual}, it suffices to show that 
$\lambda_{\max}(\sum_{i=1}^n z_i x_i x_i^\T) \geq 0$.
Let $M(z) \coloneqq \sum_{i=1}^n z_i x_i x_i^\T$.
Let $\eps > 0$, and $y_i(\eps) \coloneqq z_i + \eps$. Since $\sum y_i > 0$, there exists $u_\eps \in \mcS^{d-1}$ (the Euclidean unit sphere in $\bbR^d$) such that 
$u_\eps^\T M(z) u_\eps + \eps \sum_{i=1}^n \langle u_\eps, x_i \rangle^2 > 0$. Extracting a converging sub-sequence as $\eps \to 0$ by compactness, there exists $u \in \mcS^{d-1}$ with $u^\T M(z) u \geq 0$.
\end{proof}

\section{Proof of Theorem~\ref{thm:main}}\label{sec:proof}
\noindent
\textbf{Notation --}
Positive universal constants are generically denoted as $c_k$ or $C_k$, and may vary from line to line.
We will clarify possible dependencies of such constants on relevant parameters when necessary.
$\mcS_d$ denotes the set of $d \times d$ real symmetric matrices, $\Id_d$ is the identity matrix, and
$\ones_d$ is the all-ones vector. $\mcS^{d-1}$ is the Euclidean unit sphere in $\bbR^d$.

\myskip
\textbf{Remark --}
Since the ellipsoid fitting has a clear monotonicity property with respect to $n$, we assume without loss of generality in what follows that $n = \omega(d^{2-\eps})$ for any fixed $\eps > 0$. The polynomial exponent on the probability estimates, of the form $n^{-\beta}$, can be taken to be arbitrarily large, but it will be considered fixed throughout, with $\beta \geq 1$, and as it will be clear below constants generally depend on $\beta$.

\subsection{Identity perturbation ansatz}

In the identity perturbation ansatz \cite{venkat2022near,kane2022nearly}, we look for a fitting ellipsoid $\Sigma \in \mcS_d$ in the form: 
\begin{equation}\label{eq:ip_ansatz}
    \Sigma = \Id_d + \sum_{i=1}^n q_i x_i x_i^\T,
\end{equation}
for some $q \in \bbR^n$.
Having $\Sigma \succeq 0$ is thus equivalent to:
\begin{equation}\label{eq:Sigma_pos}
    \sum_{i=1}^n q_i x_i x_i^\T \succeq - \Id_d.
\end{equation}
We denote $x_i = \sqrt{d_i} \omega_i$, with $\omega_i \iid \Unif[\mcS^{d-1}]$, and $d_i \coloneqq \|x_i\|_2^2$, 
and we let $D \coloneqq \Diag(\{d_i\}_{i=1}^n)$ and $\Theta \in \bbR^{n  \times n}$ with $\Theta_{ij} \coloneqq \langle \omega_i, \omega_j \rangle^2$.
Note that $d_i$ are i.i.d.\ variables, independent of the directions $\omega_i$.
Plugging the ansatz of eq.~\eqref{eq:ip_ansatz} into the ellipsoid fitting equations $x_i^\T \Sigma x_i = 1$ yields:
\begin{equation*}
    \ones_n = D \ones_n + D \Theta D q.
\end{equation*}
Assuming that $D$ and $\Theta$ are invertible, this equation is solved by:
\begin{equation*}
    q = D^{-1} \Theta^{-1} (D^{-1} \ones_n - \ones_n).
\end{equation*}
Plugging it back into eq.~\eqref{eq:Sigma_pos}, we see that the identity perturbation 
ansatz gives a semidefinite positive solution to the ellipsoid fitting problem if $\Theta, D$ are invertible, and
\begin{equation}\label{eq:success_ip}
    \min_{a \in \mcS^{d-1}}\sum_{i=1}^n \Big[\Theta^{-1} (D^{-1} \ones_n - \ones_n)\Big]_i \langle a, \omega_i \rangle^2 \geq -1.
\end{equation}

\subsection{Concentration of a kernel Gram matrix}

We use the following critical lemma on the concentration of the matrix $\Theta$ appearing in eq.~\eqref{eq:success_ip}. 
For $\omega_1, \cdots, \omega_n \iid \Unif[\mcS^{d-1}]$, we call $\Theta_{ij} = \langle \omega_i, \omega_j \rangle^2 = \langle \omega_i \omega_i^\T, \omega_j \omega_j^\T \rangle$ a ``kernel Gram matrix'' since it corresponds to the Gram matrix of $\{\omega_i\}_{i=1}^n$ under the kernel $K(x, y) = \langle x, y \rangle^2$.
A key technical difficulty is that while $\Theta$ can also be seen as the Gram matrix of $\{\omega_i \omega_i^\T\}_{i=1}^n$, 
the random matrices $\omega_i \omega_i^\T$ are not centered, and have tails which are heavier than sub-Gaussian, preventing us from applying classical results on the concentration of Gram matrices of sub-Gaussian random vectors, 
see e.g.~\cite{liaw2017simple}.
\begin{lemma}[Concentration of a kernel Gram matrix]
    \label{lemma:operator_norm_Theta}
    \noindent
    Let $n, d \geq 1$, and $\omega_1, \cdots, \omega_n \iid \mathrm{Unif}[\mcS^{d-1}]$.
    Let $\Theta_{ij} \coloneqq \langle \omega_i, \omega_j \rangle^2$.
    For any $\beta \geq 1$, there are constants such that, with probability greater than 
    $1 - n^{-\beta} - 2 \exp(-c_0 n)$, the following occurs: 
    \begin{equation}
        \label{eq:concentration_Theta}
        \|\Theta - \EE \Theta \|_\op \leq \frac{C_1}{d} + C_2(\beta) \Big(\sqrt{\frac{n}{d^2}} + \frac{n}{d^2}\Big)
    \end{equation}
\end{lemma}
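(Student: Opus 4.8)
The plan is to realise $\Theta$ as (up to a rank-one shift) the Gram matrix of an i.i.d.\ family of \emph{centered} random vectors living in the space of symmetric matrices, and then to quote the Gram-matrix concentration bound of~\cite{bartl2022random} for vectors with $\psi_\alpha$-controlled marginals. Vectorising the rank-one matrices, set $u_i \coloneqq \mathrm{vec}(\omega_i\omega_i^\T) \in \bbR^{d^2}$, so that $\Theta_{ij} = \langle u_i, u_j\rangle$ equals the Hilbert--Schmidt inner product of $\omega_i\omega_i^\T$ and $\omega_j\omega_j^\T$. Since $\|\omega_i\|_2 = 1$ we have $\langle u_i, \mathrm{vec}(\Id_d)\rangle = 1$, so the centered vectors $w_i \coloneqq u_i - \EE u_i = \mathrm{vec}(\omega_i\omega_i^\T - \Id_d/d)$ are traceless, hence orthogonal to $\EE u_i = \mathrm{vec}(\Id_d/d)$. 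The cross terms therefore cancel and, writing $W \in \bbR^{n\times d^2}$ for the matrix with rows $w_i^\T$,
\[ \Theta = \tfrac1d\,\ones_n\ones_n^\T + WW^\T, \qquad \EE\Theta = \tfrac1d\,\ones_n\ones_n^\T + \big(1 - \tfrac1d\big)\Id_n , \]
the second identity using independence and $\EE w_i = 0$ off the diagonal and the deterministic equality $\|w_i\|_2^2 = \|\omega_i\omega_i^\T\|_{\mathrm{HS}}^2 - \|\Id_d/d\|_{\mathrm{HS}}^2 = 1 - 1/d$ on the diagonal. Hence $\Theta - \EE\Theta = WW^\T - (1-1/d)\Id_n$, and it suffices to control the deviation of the Gram matrix of $w_1,\dots,w_n$ from $(\Tr\Sigma)\Id_n$, where $\Sigma \coloneqq \EE[w_iw_i^\T]$.

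Two inputs are needed for~\cite{bartl2022random}. First, the spectrum of $\Sigma$: a fourth-moment computation on $\mcS^{d-1}$ (cleanest via $\omega_i = g_i/\|g_i\|_2$, $g_i\sim\mcN(0,\Id_d)$, using $\EE\|g_i\|_2^4 = d(d+2)$) gives, as an operator on symmetric matrices, $\Sigma = \tfrac{2}{d(d+2)}\,\Pi$ with $\Pi$ the orthogonal projection onto traceless symmetric matrices; thus $\|\Sigma\|_\op = \tfrac{2}{d(d+2)}$, $\Tr\Sigma = 1 - 1/d$, and the effective rank $r(\Sigma) = \Tr\Sigma/\|\Sigma\|_\op = \tfrac{(d-1)(d+2)}{2}$ is of order $d^2$. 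Second, the $\psi_1$ (i.e.\ $\alpha = 1$) tail hypothesis on the marginals: for symmetric $M$ with $\|M\|_{\mathrm{HS}} = 1$, decompose $M = a\,\Id_d/\sqrt d + M_0$ with $M_0$ traceless; then $\langle w_i, M\rangle = \omega_i^\T M_0\omega_i$ has $\|\langle w_i,M\rangle\|_{L_2} = \|M_0\|_{\mathrm{HS}}\sqrt{2/(d(d+2))}$, while the Hanson--Wright inequality for $g_i^\T M_0 g_i$ together with the concentration of $\|g_i\|_2^2$ around $d$ gives $\|\langle w_i, M\rangle\|_{\psi_1} \lesssim \|M_0\|_{\mathrm{HS}}/d$ (indeed $|\langle w_i, M\rangle| \le \|M_0\|_\op$ surely). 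Dividing, the ratio $\|\langle w_i,M\rangle\|_{\psi_1}/\|\langle w_i,M\rangle\|_{L_2}$ is bounded by a universal constant, uniformly over $M$, the piece $a\,\Id_d/\sqrt d$ being harmless as $w_i \perp \mathrm{vec}(\Id_d)$.

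Feeding these into the Gram-matrix concentration theorem of~\cite{bartl2022random}, applied to $w_1,\dots,w_n$ viewed in the space of traceless symmetric matrices (dimension of order $d^2$, Hilbert--Schmidt inner product), yields on an event of probability at least $1 - n^{-\beta} - 2\exp(-c_0 n)$ — the polynomial term coming from the chosen deviation level (whence the $\beta$-dependence of the constants), the exponential term being intrinsic to the cited estimate —
\[ \big\|WW^\T - (1 - \tfrac1d)\Id_n\big\|_\op \ \lesssim_{\beta}\ \Tr(\Sigma)\Big(\sqrt{\tfrac{n}{r(\Sigma)}} + \tfrac{n}{r(\Sigma)}\Big) + (\textrm{lower order}) \ \lesssim_{\beta}\ \sqrt{\tfrac{n}{d^2}} + \tfrac{n}{d^2} + \tfrac1d , \]
which, combined with $\Theta - \EE\Theta = WW^\T - (1-1/d)\Id_n$, is precisely~\eqref{eq:concentration_Theta} after renaming constants (the $1/d$ absorbed into $C_1$, the rest into $C_2(\beta)$). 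Note this bound is consistent in all regimes of $n$ versus $r(\Sigma)$: for $n \ll d^2$ the $\sqrt{n/d^2}$ term dominates, while for $n \gg d^2$ the Gram matrix is rank-deficient and its nonzero eigenvalues concentrate near $n\|\Sigma\|_\op \asymp n/d^2$.

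The substantive analytic content sits entirely in~\cite{bartl2022random}; within the proof of the lemma the only genuinely non-formal point is the uniform $\psi_1$-check of the second paragraph — establishing the marginal tail bound over \emph{all} matrix directions with a dimension-free constant, in particular past near-degenerate $M$ and through the sphere-to-Gaussian comparison — together with the bookkeeping of the probability level, where the cited estimate (which most naturally carries exponential confidence) is invoked at the polynomial level $n^{-\beta}$ at the cost of a $\beta$-dependent constant $C_2(\beta)$.
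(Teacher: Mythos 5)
Your proposal is correct and follows essentially the same route as the paper: center the vectorized rank-one matrices (the paper uses the symmetric flattening into $\bbR^{d(d+1)/2}$ rather than the full $\mathrm{vec}$ into $\bbR^{d^2}$, an immaterial difference), peel off the rank-one $\frac1d\ones_n\ones_n^\T$, compute the covariance $\frac{2}{d(d+2)}\Pi$, verify the uniform $\psi_1$ marginal bound via Hanson--Wright, and invoke the Gram-matrix theorem of \cite{bartl2022random}. The only cosmetic differences are that the paper explicitly symmetrizes with random signs and rescales to exact isotropic position before applying the theorem, whereas you phrase the conclusion via the effective rank of $\Sigma$.
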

\noindent
Notice that $\EE \Theta = (1-1/d)\Id_n + (1/d) \ones_n \ones_n^\T$.
This lemma is a consequence of the analysis of \cite{bartl2022random}, and is proven in Section~\ref{subsec:proof_lemma_operator_norm}.

\myskip 
\textbf{Remark: improving the probability upper bound --} 
A careful analysis of the proof arguments of \cite{bartl2022random} reveals that in the present case in which the matrix to control is a Gram matrix 
of sub-exponential vectors (which will be the case here as detailed in the proof), the probability estimate could likely be 
improved significantly to yield a probability lower bound of $1 - 2 \exp(-c n)$. We leave for future work to carry out this improvement,
and keep a formulation that follows directly from the results of \cite{bartl2022random}.

\myskip
We get the following corollary:
\begin{corollary}[Concentration of the inverse]\label{cor:concentration_Thetainv}
    \noindent 
    Let $n, d \geq 1$, and $\omega_1, \cdots, \omega_n \iid \mathrm{Unif}[\mcS^{d-1}]$.
    Let $\Theta_{ij} \coloneqq \langle \omega_i, \omega_j \rangle^2$.
    For any $\beta \geq 1$, 
    there exists $\alpha = \alpha(\beta) > 0$ and constants such that 
    if $n \leq \alpha d^2$ and $d \geq d_0(\beta)$, then 
    with probability at least $1 - n^{-\beta} - 2 \exp(-c_0 n)$:
    \begin{equation}\label{eq:concentration_Thetainv}
        \Big\| \Theta^{-1} - \Big(\Id_n - \frac{1}{n} \ones_n \ones_n^\T\Big)\Big\|_\op \leq \frac{C_1}{d} + C_2(\beta) \sqrt{\frac{n}{d^2}} + \frac{d}{n}.
    \end{equation}
    In particular, assuming $n = \omega(d)$, for all $\beta \geq 1$ there is $\alpha = \alpha(\beta) > 0$ such that if $n \leq \alpha d^2$:
    \begin{equation}\label{eq:condition_nb_Theta}
        \bbP[\|\Theta^{-1}\|_\op \leq 2] \geq 1 - 2n^{-\beta}.
    \end{equation}
\end{corollary}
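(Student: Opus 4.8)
The plan is to write $\Theta = \EE\Theta + E$ with $E \coloneqq \Theta - \EE\Theta$ and to invert $\Theta$ by a Neumann series around $\EE\Theta$, feeding in the operator-norm bound on $E$ from Lemma~\ref{lemma:operator_norm_Theta}. First I would record the exact spectral structure of $\EE\Theta = (1 - 1/d)\Id_n + (1/d)\ones_n\ones_n^\T$: writing $P \coloneqq \ones_n\ones_n^\T/n$ for the orthogonal projection onto the line spanned by $\ones_n$, one has $\EE\Theta = \tfrac{d-1}{d}(\Id_n - P) + \tfrac{d+n-1}{d}P$, so $\EE\Theta$ is invertible with $(\EE\Theta)^{-1} = \tfrac{d}{d-1}(\Id_n - P) + \tfrac{d}{d+n-1}P$, and in particular $\|(\EE\Theta)^{-1}\|_\op = \tfrac{d}{d-1} \le 2$ for $d \ge 2$. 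Subtracting $\Id_n - P$ and using that $\Id_n - P$ and $P$ project onto orthogonal subspaces gives the clean bound $\|(\EE\Theta)^{-1} - (\Id_n - P)\|_\op = \max\{\tfrac1{d-1}, \tfrac{d}{d+n-1}\} \le \tfrac2d + \tfrac dn$, which already accounts for the last two error terms in eq.~\eqref{eq:concentration_Thetainv}.

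Next I would invoke Lemma~\ref{lemma:operator_norm_Theta}: on the event of probability at least $1 - n^{-\beta} - 2e^{-c_0 n}$ there, $\|E\|_\op \le C_1/d + C_2(\beta)\big(\sqrt{n}/d + n/d^2\big)$, and since $n \le \alpha d^2 \le d^2$ one may absorb $n/d^2 \le \sqrt n/d$ to get $\|E\|_\op \le C_1/d + 2C_2(\beta)\sqrt{n}/d$. Choosing $\alpha = \alpha(\beta)$ small enough and $d \ge d_0(\beta)$ large enough forces $\|(\EE\Theta)^{-1}E\|_\op \le 2\|E\|_\op \le \tfrac12$; then $\Id_n + (\EE\Theta)^{-1}E$ is invertible, hence so is $\Theta = \EE\Theta\big(\Id_n + (\EE\Theta)^{-1}E\big)$, and the Neumann expansion yields $\|\Theta^{-1} - (\EE\Theta)^{-1}\|_\op \le \tfrac{\|(\EE\Theta)^{-1}E\|_\op}{1 - \|(\EE\Theta)^{-1}E\|_\op}\,\|(\EE\Theta)^{-1}\|_\op \le 8\|E\|_\op \le C_1'/d + C_2'(\beta)\sqrt{n/d^2}$. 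Combining this with the previous display via the triangle inequality and renaming constants gives eq.~\eqref{eq:concentration_Thetainv}.

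For eq.~\eqref{eq:condition_nb_Theta} I would then bound $\|\Theta^{-1}\|_\op \le \|\Id_n - P\|_\op + \|\Theta^{-1} - (\Id_n - P)\|_\op \le 1 + C_1/d + C_2(\beta)\sqrt{n/d^2} + d/n$, shrink $\alpha$ further so that $C_2(\beta)\sqrt\alpha \le \tfrac13$, and use $n = \omega(d)$ together with $d \to \infty$ to make $d/n \le \tfrac13$ and $C_1/d \le \tfrac13$, giving $\|\Theta^{-1}\|_\op \le 2$; finally $2e^{-c_0 n} \le n^{-\beta}$ for $n$ large absorbs the exponential term, leaving probability at least $1 - 2n^{-\beta}$. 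The only genuinely delicate point is calibrating $\alpha(\beta)$ and $d_0(\beta)$ so that the perturbation is strictly contractive, i.e.\ $\|(\EE\Theta)^{-1}E\|_\op$ stays bounded away from $1$; everything else is bookkeeping. Note also that the $d/n$ term in eq.~\eqref{eq:concentration_Thetainv} is intrinsic, since it already appears in $(\EE\Theta)^{-1}$ itself, and hence cannot be removed by this argument.
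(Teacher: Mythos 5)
Your proposal is correct and follows essentially the same route as the paper: a perturbation-of-the-inverse argument around the (explicitly diagonalized) population matrix, fed by the operator-norm bound of Lemma~\ref{lemma:operator_norm_Theta}, followed by a triangle inequality to swap $\tfrac{1}{n+d}\ones_n\ones_n^\T$ for $\tfrac{1}{n}\ones_n\ones_n^\T$ at the cost of the intrinsic $d/n$ term. The only cosmetic difference is that you center at $\EE\Theta$ and use a Neumann series, whereas the paper centers at $\Id_n+(1/d)\ones_n\ones_n^\T$ and invokes its Lemma~\ref{lemma:inverse_op}; these are interchangeable.
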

\begin{proof}[Proof of Corollary~\ref{cor:concentration_Thetainv} --]
Note that $\|\EE \Theta - [\Id_n + (1/d) \ones_n \ones_n^\T]\|_\op = (1/d)$, so that eq.~\eqref{eq:concentration_Theta} also holds replacing $\EE \Theta$ by $\Id_n + (1/d) \ones_n \ones_n^\T$.
We use the following elementary lemma, proven in Section~\ref{subsec:proof_lemma_inverse_op}.
\begin{lemma}\label{lemma:inverse_op}
    \noindent  
    Let $A, B \in \mcS_n$ two symmetric matrices, such that $B \succ 0$, and for some $\eps < \lambda_{\min}(B)$ we have 
    $\|A - B \|_\op \leq \eps$. Then 
    \begin{equation*}
       \|A^{-1} - B^{-1}\|_\op \leq \eps \frac{\|B^{-1}\|_\op^2}{1 - \eps \|B^{-1}\|_\op}.
    \end{equation*}
\end{lemma}
\noindent
Applying Lemma~\ref{lemma:inverse_op} to $B = \Id_n + (1/d) \ones_n \ones_n^\T$, such that $\lambda_{\min}(B) = 1$, 
and $B^{-1} = \Id_n - (d+n)^{-1} \ones_n \ones_n^\T $, gives, with probability at least $1 - n^{-\beta} - 2 \exp(-c_0 n)$:
\begin{align*}
    \Big\|\Theta^{-1} - \Big(\Id_n - \frac{1}{n} \ones_n \ones_n^\T\Big)\Big\|_\op &\leq \Big\|\Theta^{-1} - \Big(\Id_n - \frac{1}{n+d} \ones_n \ones_n^\T\Big)\Big\|_\op + \frac{d}{n}, \\
    &\leq \frac{\frac{C_1}{d} + C_2(\beta)\Big(\sqrt{\frac{n}{d^2}} + \frac{n}{d^2}\Big)}{1 - \frac{C_1}{d} - C_2(\beta)\Big(\sqrt{\frac{n}{d^2}} + \frac{n}{d^2}\Big)} + \frac{d}{n}, \\ 
    &\leq \frac{C'_1}{d} + C'_2 \sqrt{\frac{n}{d^2}} + \frac{d}{n},
\end{align*}
for large enough $d$ and small enough $n/d^2$ (depending only on $\beta$).
\end{proof}

\subsection{Reducing to a net}

We show some useful estimates in Section~\ref{subsec:proof_high_p_events}, summarized in the following lemma.
\begin{lemma}[Some high-probability events]\label{lemma:high_p_events}
    \noindent
    Let $\omega_1, \cdots, \omega_n \iid \Unif[\mcS^{d-1}]$, and $\Theta_{ij} \coloneqq \langle \omega_i, \omega_j \rangle^2$.
    Denote $U(a)_i \coloneqq \langle \omega_i, a \rangle^2$ for $a \in \mcS^{d-1}$.
    We let $(a_j)_{j=1}^N$ be a $(1/2)$-net of $\mcS^{d-1}$. 
    Let $\beta \geq 1$.
    There exists $\alpha = \alpha(\beta) > 0$ such that if $n \leq \alpha d^2$,
    then 
    we have:
    \begin{itemize}
        \item[$(i)$] $\bbP[E_1] \geq 1 - 2 \exp(-C_1 d)$, with $E_1 \coloneqq \{\max_{j \in [N]} \|U(a_j)\|_2 \leq C_2\}$ (for a sufficiently large $C_2$).
        \item[$(ii)$] $\bbP[E_2] \geq 1 - 2 n^{-\beta}$, with $E_2 \coloneqq \{\|\Theta^{-1}\|_\op \leq 2\}$.
    \end{itemize}
\end{lemma}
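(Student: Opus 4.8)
The two parts are essentially independent. Part $(ii)$ is immediate: it is exactly eq.~\eqref{eq:condition_nb_Theta} of Corollary~\ref{cor:concentration_Thetainv}, which applies since we are in the regime $n = \omega(d^{2-\eps})$ (in particular $n = \omega(d)$) and $n \leq \alpha(\beta) d^2$. So the work is all in part $(i)$.

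For part $(i)$, the plan is to first control $\|U(a)\|_2^2 = \sum_{i=1}^n \langle \omega_i, a\rangle^4$ for a \emph{fixed} $a \in \mcS^{d-1}$, and then take a union bound over the net $(a_j)_{j=1}^N$, using that a $(1/2)$-net of $\mcS^{d-1}$ can be chosen with $N \leq 5^d$. For fixed $a$, the summands $\langle \omega_i, a\rangle^4$ are i.i.d.\ nonnegative with mean $\EE \langle \omega_1, a\rangle^4 = 3/(d(d+2)) = \Theta(1/d^2)$, so $\EE \|U(a)\|_2^2 = \Theta(n/d^2)$, which is $O(1)$ in our regime — this is why a constant bound $C_2$ is the right target. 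To get the concentration I would use that $\langle \omega_1, a\rangle^2$ is sub-exponential with $\psi_1$-norm $\Theta(1/d)$ (for $\omega_1$ uniform on the sphere, $d \langle \omega_1, a\rangle^2$ is of order a $\mathrm{Beta}(1/2,(d-1)/2)$ variable, with exponential tails at scale $1$), hence $\langle \omega_1, a\rangle^4$ has a $\psi_{1/2}$-type tail with the appropriate small scale. Applying a Bernstein-type inequality for sums of independent variables with such heavy-ish tails (e.g.\ the sub-Weibull / generalized Bernstein inequality), the sum $\|U(a)\|_2^2$ concentrates around its mean $O(n/d^2)$ with a deviation probability that decays like $\exp(-c(\text{something}))$; the key point is to show this deviation probability can be made smaller than $5^{-d} \cdot 2\exp(-C_1 d)$, i.e.\ of order $\exp(-c d)$, by choosing the constant $C_2$ large enough (and using $n \leq \alpha d^2$ to absorb the $n/d^2$ mean term into a constant). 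A union bound over the $N \leq 5^d$ net points then gives $\bbP[E_1] \geq 1 - 2\exp(-C_1 d)$.

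The main obstacle is bookkeeping the tail exponent: the summands $\langle \omega_i, a \rangle^4$ have only a stretched-exponential ($\psi_{1/2}$) tail, so one must be careful that the resulting tail bound on the sum, at the deviation level needed to beat the $5^d$ net cardinality, is genuinely of the form $\exp(-cd)$ rather than something weaker like $\exp(-c\sqrt{d})$. This works out because we are asking the sum (of mean $O(n/d^2) = O(1)$) to exceed a \emph{constant} threshold, which is a large multiplicative deviation; in that regime the relevant term in the Bernstein bound is the one governed by the $\psi_{1/2}$-norm of a single summand times $n$, i.e.\ of order $n/d^2 \cdot (\log(\cdot))^2$-type, and one checks that exceeding an absolute constant forces a deviation of order $d$ in the exponent. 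Alternatively — and perhaps more cleanly — one can avoid stretched-exponential tails entirely by truncating: on the high-probability event $\max_{i,j}|\langle \omega_i,\omega_j\rangle| \lesssim \sqrt{(\log n)/d}$ (and in particular each $|\langle\omega_i, a\rangle|$ is controlled after adding $a$ to the point set, or just bounding $\langle \omega_i, a\rangle^2 \le \|\omega_i\|^2 = 1$ trivially gives $\langle\omega_i,a\rangle^4 \le \langle\omega_i,a\rangle^2$), so that $\|U(a)\|_2^2 \leq \sum_i \langle \omega_i, a\rangle^2 \cdot \max_i \langle \omega_i, a\rangle^2$, and then $\sum_i \langle\omega_i,a\rangle^2$ is a sum of sub-exponential variables of mean $n/d = o(d)$ which concentrates at scale $\exp(-cd)$, while $\max_i \langle\omega_i,a\rangle^2 \lesssim (\log n)/d$ on the good event — giving $\|U(a)\|_2^2 \lesssim (n\log n)/d^2 = o(1)$, comfortably below a constant. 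Either route closes part $(i)$; I would present the truncation route for simplicity, citing the relevant concentration of $\sum_i \langle \omega_i, a\rangle^2$ (a normalized chi-square-like quantity) and the standard spherical-cap bound $\bbP[|\langle \omega_1, a\rangle| \geq t] \leq 2\exp(-c d t^2)$.
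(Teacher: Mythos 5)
Your part $(ii)$ is exactly the paper's: it is eq.~\eqref{eq:condition_nb_Theta} of Corollary~\ref{cor:concentration_Thetainv}. Your first route for part $(i)$ is also exactly the paper's proof: one applies the sub-Weibull Bernstein inequality (Lemma~\ref{lemma:tail_sum_sub_Weibull} with $q=1/2$) to the i.i.d.\ summands $\langle\omega_i,a\rangle^4$, which have mean at most $3/d^2$ and tail $\bbP[\langle\omega_i,a\rangle^4\geq t]\leq 2\exp\{-Cd\sqrt{t}\}$, then union-bounds over the $N\leq 5^d$ net points; asking the sum to exceed its mean $3n/d^2\leq 3\alpha$ by a constant $t$ makes the exponent $\min(d^4t^2/n,\,d\sqrt{t})$ of order $d$ (indeed $d^4/n\geq d^2/\alpha$), which beats $d\log 5$. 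Your diagnosis of the ``main obstacle'' (the $\psi_{1/2}$ tail versus the $5^d$ cardinality) is also correct.

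However, the ``simpler'' truncation route that you say you would actually present does not work in the regime this lemma is needed for, namely $n\asymp d^2$. First, the trivial bound $\langle\omega_i,a\rangle^4\leq\langle\omega_i,a\rangle^2$ reduces the problem to $\sum_i\langle\omega_i,a\rangle^2$, whose mean is $n/d\asymp d\to\infty$, so it cannot yield a constant bound on $\|U(a)\|_2^2$. Second, the refined version $\|U(a)\|_2^2\leq(\max_i\langle\omega_i,a\rangle^2)\sum_i\langle\omega_i,a\rangle^2\lesssim\frac{\log n}{d}\cdot\frac{n}{d}=\frac{n\log n}{d^2}$ is \emph{not} $o(1)$ when $n\asymp d^2$: it diverges like $\alpha\log n$. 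This is precisely the polylogarithmic loss (forcing $n=O(d^2/\log d)$) that the whole paper is designed to remove. Moreover, even the premise of that route is unavailable at the required probability level: to have $\max_i\langle\omega_i,a_j\rangle^2\leq C(\log n)/d$ \emph{simultaneously for all} $j\in[N]$ with $N\leq 5^d$ and failure probability $2\exp(-C_1 d)$, a union bound would need the per-pair failure probability to be at most $5^{-d}e^{-C_1 d}/n$, whereas the spherical-cap bound at level $t^2=(\log n)/d$ only gives a polynomial $n^{-c}$; at the $e^{-cd}$ level one can only control $|\langle\omega_i,a_j\rangle|$ by an absolute constant uniformly over the net, which brings you back to the trivial bound. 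So present your first route (the paper's); the alternative has a genuine gap.
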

\noindent
In the following, we fix  $(a_j)_{j=1}^N$ a $(1/2)$-net of $\mcS^{d-1}$, such that $N \leq 5^d$ \cite{vershynin2018high}.
Let $\tilde q \coloneqq D^{-1} \ones_n - \ones_n$.
For any matrix $M \in \bbR^{d \times d}$, we have \cite{vershynin2018high}:
\begin{equation*}
    \max_{a \in \mcS^{d-1}} a^\T M a \leq 2 \max_{a \in \mcN} a^\T M a.
\end{equation*}
Therefore:
\begin{equation}\label{eq:necessary_condition_net}
    \bbP[\min_{a \in \mcS^{d-1}}\sum_{i=1}^n (\Theta^{-1} \tilde q)_i \langle a, \omega_i \rangle^2 \leq -1] 
    \leq  
    \bbP\Bigg[\max_{j \in [N]}\Bigg|\sum_{i=1}^n (\Theta^{-1} \tilde q)_i \langle a_j, \omega_i \rangle^2 \Bigg|\geq \frac{1}{2} \Bigg].
\end{equation}
Defining $g_\Theta(a) \coloneqq \sum_{i=1}^n \tilde q_i [\Theta^{-1} U(a)]_i$,
our goal reduced to show that $\max_{j \in [N]} |g_\Theta(a_j)| \leq 1/2$ with probability at least $1 - C n^{-\beta}$, for $n/d^2$ small enough.
First, we show that we can truncate and center the variables $\tilde q_i$:
\begin{lemma}[Truncating and centering $\tilde q$]\label{lemma:truncation_centering}
    \noindent
    Let $A_i \coloneqq \{ |\tilde q_i| \leq 1  \}$ and $A \coloneqq \cap_{i=1}^n A_i$.
    We denote $r_i \coloneqq \tilde q_i | A$, and $y_i \coloneqq r_i - \EE r_i$.
    Then $\{y_i\}_{i=1}^n$ are i.i.d.\ centered $K/\sqrt{d}$-sub-Gaussian random variables, for some universal $K > 0$.
    Moreover, for any $\beta \geq 1$ there exists $\alpha = \alpha(\beta) > 0$ such that if $n \leq \alpha d^2$, then:
    \begin{equation*}
        \bbP\Bigg[\max_{j \in [N]}\Bigg|\sum_{i=1}^n (\Theta^{-1} \tilde q)_i \langle a_j, \omega_i \rangle^2 \Bigg|\geq \frac{1}{2} \Bigg] 
        \leq 
        \bbP\Bigg[\max_{j \in [N]}\Bigg|\sum_{i=1}^n (\Theta^{-1} y)_i \langle a_j, \omega_i \rangle^2 \Bigg|\geq \frac{1}{4} \Bigg] 
        + C n^{-\beta}.
    \end{equation*}
\end{lemma}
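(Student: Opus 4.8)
I would split the statement into two parts: (i) that the $y_i$ are i.i.d.\ centered $K/\sqrt d$-sub-Gaussian, and (ii) the probability comparison. Recall that $\tilde q_i = 1/d_i - 1$, where $d_i = \|x_i\|_2^2$ is distributed as $\chi_d^2 / d$, and that the radii $\{d_i\}$ are independent of the directions $\{\omega_i\}$ (hence of $\Theta$ and of all the $U(a_j)$). Since $\tilde q_i \geq -1$ deterministically, we have $A_i = \{|\tilde q_i| \leq 1\} = \{d_i \geq 1/2\}$, and a standard Chernoff bound for $\chi_d^2$ gives $\bbP[A_i^c] = \bbP[\chi_d^2 \leq d/2] \leq e^{-c_0 d}$; hence $\bbP[A^c] \leq n e^{-c_0 d}$, which is $\leq C n^{-\beta}$ once $d \geq d_0(\beta)$ and $n \leq \alpha d^2$.

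For (i): since $A = \cap_i A_i$ and each $A_i$ depends only on $d_i$, the conditioned vector $(r_1,\dots,r_n)$ has independent coordinates, $r_i$ being distributed as $\tilde q_i$ conditioned on $A_i$; hence the $y_i = r_i - \EE r_i$ are i.i.d.\ and centered. For the tail I would write, for $s \in [0,1)$, $\bbP[|\tilde q_i| \geq s] = \bbP[d_i \leq (1+s)^{-1}] + \bbP[d_i \geq (1-s)^{-1}]$ and apply the two standard one-sided Chernoff bounds for $\chi_d^2$; a short computation shows that both terms are bounded by $e^{-c s^2 d}$ for $s \in [0,1]$. Crucially, $r_i$ is supported in $[-1,1]$, so $\bbP[|r_i| \geq s] = 0$ for $s > 1$ (this is exactly where the truncation matters: the bare variable $\tilde q_i$ has only a polynomial-in-$s$ right tail at fixed $d$, so it is not $K/\sqrt d$-sub-Gaussian), while for $s \in [0,1]$, $\bbP[|r_i| \geq s] \leq \bbP[A_i]^{-1} \bbP[|\tilde q_i| \geq s] \leq 2 e^{-c s^2 d}$. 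Thus $r_i$ is $K/\sqrt d$-sub-Gaussian, and subtracting its mean only changes $K$ by a constant factor.

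For (ii): conditioning on $A$ does not change the joint law of $(\Theta, \{U(a_j)\})$ and replaces $\tilde q$ by a vector distributed as $r$, independent of it; therefore $\bbP[\max_j |\langle U(a_j),\Theta^{-1}\tilde q\rangle| \geq 1/2] \leq \bbP[\max_j |\langle U(a_j),\Theta^{-1} r\rangle| \geq 1/2] + \bbP[A^c]$. Now write $r = y + \mu\ones_n$ with $\mu = \EE r_1$, so that $\langle U(a_j),\Theta^{-1}r\rangle = \langle U(a_j),\Theta^{-1}y\rangle + \mu\langle U(a_j),\Theta^{-1}\ones_n\rangle$. Using the explicit moments $\EE[1/d_1] = d/(d-2)$ and $\EE[1/d_1^2] = d^2/((d-2)(d-4))$ we get $|\EE\tilde q_1| = O(1/d)$ and $\EE\tilde q_1^2 = O(1/d)$, hence $|\mu| \leq (|\EE\tilde q_1| + \EE[\tilde q_1^2]^{1/2}\bbP[A_1^c]^{1/2})/\bbP[A_1] \leq C/d$. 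On the event $E_1 \cap E_2$ of Lemma~\ref{lemma:high_p_events} we have $\|U(a_j)\|_2 \leq C_2$ for all $j$ and $\|\Theta^{-1}\ones_n\|_2 \leq 2\sqrt n$, so by Cauchy--Schwarz $|\mu\langle U(a_j),\Theta^{-1}\ones_n\rangle| \leq 2 C C_2 \sqrt n / d \leq 2 C C_2 \sqrt\alpha \leq 1/4$ for $\alpha = \alpha(\beta)$ small enough. Therefore, on $E_1 \cap E_2$, $\{\max_j |\langle U(a_j),\Theta^{-1}r\rangle| \geq 1/2\} \subseteq \{\max_j |\langle U(a_j),\Theta^{-1}y\rangle| \geq 1/4\}$, and combining this with $\bbP[E_1^c] \leq 2 e^{-C_1 d}$, $\bbP[E_2^c] \leq 2 n^{-\beta}$ and the bound on $\bbP[A^c]$ (all the $e^{-cd}$ terms being $\leq C n^{-\beta}$ under $n \leq \alpha d^2$, $d \geq d_0(\beta)$) gives the claim.

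The main obstacle I anticipate is part (i) with the correct scale $1/\sqrt d$: it is immediate that $r_i$ is bounded, hence $O(1)$-sub-Gaussian, but the factor $1/\sqrt d$ is what ultimately allows a quadratic (rather than polynomial) number of points, and getting it requires the sharp two-sided Chernoff control of $\chi_d^2$ together with the observation that the truncation at level $1$ is exactly what removes the non-sub-Gaussian portion of the right tail of $\tilde q_i$.
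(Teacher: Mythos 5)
Your proposal is correct and follows essentially the same route as the paper: condition on $A$ (paying $\bbP[A^c]\leq 2ne^{-cd}\leq Cn^{-\beta}$), establish the $K/\sqrt d$-sub-Gaussianity of $r_i$ from the two-sided $\chi^2_d$ tail bounds plus the deterministic truncation at $1$, and absorb the mean shift $\EE r = \mu\ones_n$ using $|\mu|\leq C/d$ together with the events $E_1$ (net bound on $\|U(a_j)\|_2$) and $E_2$ ($\|\Theta^{-1}\|_\op\leq 2$), which is exactly the paper's bound $\|\EE r\|_2\,\|\Theta^{-1}\|_\op\,\|U(a_j)\|_2\leq C\sqrt{n}/d\leq 1/4$ for $\alpha$ small. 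The only differences are cosmetic (generic Chernoff bounds in place of the paper's Corollary on $\tilde q$, and factoring the mean as $\mu\langle U(a_j),\Theta^{-1}\ones_n\rangle$ rather than bounding $\|\EE r\|_2$ directly).
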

\noindent
This lemma is proven in Section~\ref{subsec:proof_truncation_centering}.

\subsection{Controlling points on the net}

In what follows, we replace the variables $\tilde q_i$ by $y_i$ by using Lemma~\ref{lemma:truncation_centering} (assuming $n \leq \alpha d^2$ for $\alpha = \alpha(\beta)$ small enough).
We define, for $a \in \mcS^{d-1}$: 
\begin{equation}\label{eq:f_Theta}
    f_\Theta(a) \coloneqq \sum_{i=1}^n y_i [\Theta^{-1} U(a)]_i = \sum_{i=1}^n [\Theta^{-1}y]_i U(a)_i,
\end{equation}
with $U(a) \coloneqq (\langle \omega_i, a \rangle^2)_{i=1}^n$.
We prove in Section~\ref{subsec:proof_condition_y} the following elementary lemma:
\begin{lemma}\label{lemma:condition_y}
    \noindent
    Let $\{y_i\}_{i=1}^n$ be i.i.d.\ centered sub-Gaussian random variables, with $\|y_1\|_{\psi_2} \leq K / \sqrt{d}$, 
    and $M \in \mcS_n$. 
    Then:
    \begin{equation*}
       \bbP\big[\|M y\|_\infty \geq C \|M\|_\op d^{-3/8}\big] \leq 2n \exp\{-d^{1/4}\}.
    \end{equation*}
\end{lemma}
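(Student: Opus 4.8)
The plan is to bound $\|My\|_\infty = \max_{i \in [n]} |(My)_i|$ by a union bound over the $n$ coordinates, where each coordinate $(My)_i = \sum_{j=1}^n M_{ij} y_j = \langle m_i, y\rangle$ is a linear combination of the i.i.d.\ sub-Gaussian entries $y_j$ with coefficient vector $m_i$ (the $i$-th row of $M$). Since the $y_j$ are independent and $K/\sqrt d$-sub-Gaussian, the sum $\langle m_i, y\rangle$ is $(K/\sqrt d)\|m_i\|_2$-sub-Gaussian by the standard rotation-invariance/additivity of sub-Gaussian norms (e.g.\ Proposition 2.6.1 in \cite{vershynin2018high}). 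Moreover $\|m_i\|_2 \leq \|M\|_\op$, so each $(My)_i$ is $(K/\sqrt d)\|M\|_\op$-sub-Gaussian.

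First I would write, for each fixed $i$ and any $t > 0$, the sub-Gaussian tail bound
\begin{equation*}
    \bbP\big[|(My)_i| \geq t\big] \leq 2 \exp\Big\{ - \frac{c\, d\, t^2}{K^2 \|M\|_\op^2} \Big\}
\end{equation*}
for a universal constant $c > 0$. Then I would choose $t = C \|M\|_\op d^{-3/8}$ with $C$ large enough (depending on $K$ and $c$) so that the exponent becomes $- c C^2 d^{1/4} / K^2 \leq - d^{1/4}$. Substituting gives $\bbP[|(My)_i| \geq C\|M\|_\op d^{-3/8}] \leq 2 \exp\{-d^{1/4}\}$ for each $i$.

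Finally I would take a union bound over $i \in [n]$:
\begin{equation*}
    \bbP\big[\|My\|_\infty \geq C \|M\|_\op d^{-3/8}\big] \leq \sum_{i=1}^n \bbP\big[|(My)_i| \geq C\|M\|_\op d^{-3/8}\big] \leq 2n \exp\{-d^{1/4}\},
\end{equation*}
which is exactly the claimed bound.

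There is essentially no serious obstacle here --- this is a routine concentration argument --- but the one point requiring minor care is the precise relationship between the scaling exponent $d^{-3/8}$ in the conclusion and the $d^{1/4}$ in the exponent: the choice $t \propto \|M\|_\op d^{-3/8}$ is tailored so that $d \cdot t^2 / \|M\|_\op^2 \propto d^{1/4}$, and one should check that the constant $C$ can indeed be chosen as a universal constant (it depends only on $K$, which is itself universal by Lemma~\ref{lemma:truncation_centering}). One should also note that the bound is only meaningful when $\|M\|_\op$ is controlled, which is precisely why it will be combined with the operator-norm estimate on $\Theta^{-1}$ from Corollary~\ref{cor:concentration_Thetainv}; but as a standalone statement Lemma~\ref{lemma:condition_y} holds for an arbitrary fixed (or independent) symmetric matrix $M$.
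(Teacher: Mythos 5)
Your argument is correct and is essentially identical to the paper's proof: both apply Hoeffding's inequality coordinatewise to $\langle M_i, y\rangle$, bound the row norm $\|M_i\|_2$ by $\|M\|_\op$, take a union bound over the $n$ coordinates, and choose $t = C\|M\|_\op d^{-3/8}$ so that the exponent becomes $d^{1/4}$. No differences worth noting.
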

\noindent
We let
\begin{equation*}
    E_3 \coloneqq \big\{\|\Theta^{-1} y\|_\infty \leq C \|\Theta^{-1}\|_\op d^{-3/8}\big\},
\end{equation*} 
and $E \coloneqq \cap_{k=1}^3 E_k$. 
We have from Lemmas~\ref{lemma:high_p_events} and \ref{lemma:condition_y} that (recall that $y$ is independent of $\Theta$) 
there is $\alpha = \alpha(\beta) > 0$ such that for $n \leq \alpha d^2$: 
\begin{equation}\label{eq:E_high_p_event}
    \bbP[E] \geq 1 - C n^{-\beta}.
\end{equation}
\noindent
Let us fix $a \in \mcS^{d-1}$.
For $\eta \in (0,1)$ we define $S(\eta) \coloneqq \{i \in [n] \, : \, |\langle \omega_i, a \rangle| > \eta \}$. 
Since $\omega_i \iid \Unif[\mcS^{d-1}]$, $|\langle \omega_i , a \rangle|$ are i.i.d.\ sub-Gaussian random variables, with sub-Gaussian norm~$C / \sqrt{d}$~\cite{vershynin2018high}.
$|S(\eta)|$ is thus a binomial random variable, 
with parameters $n$ and $p \leq 2 \exp\{-C d \eta^2\}$.
By Theorem~1 of \cite{klenke2010stochastic}, $|S(\eta)|$ is stochastically dominated by 
a Poisson random variable with parameter $-n \log (1-p)$. 
Assuming that $d \eta^2 \to \infty$, we have for $d$ large enough\footnote{Since $\log(1-x) \geq -2x$ for $0 \leq x \leq 1/2$.}, 
\begin{equation*}
    - n \log (1-p) \leq 2n p \leq 4 n \exp\{-C d \eta^2\}.
\end{equation*}
Letting $\lambda \coloneqq 4 n \exp\{-C d \eta^2\}$ and $X \sim \mathrm{Pois}(\lambda)$, $|S(\eta)|$ is thus stochastically dominated  
by $X$. 
We reach that for all $x > \lambda$ (see e.g.\ Theorem~5.4 of \cite{mitzenmacher2017probability} for the second inequality): 
\begin{equation}\label{eq:ub_S_tail}
    \bbP[|S(\eta)| \geq x] \leq \bbP[X \geq x] \leq \Bigg(\frac{e\lambda}{x}\Bigg)^x e^{-\lambda}.
\end{equation}
We get from eq.~\eqref{eq:ub_S_tail} that 
\begin{equation}\label{eq:ub_S}
    \bbP[|S(\eta)| \geq d^{1/4}] \leq \exp\Big\{d^{1/4} \log (4ne) - C d^{5/4} \eta^2  -\frac{d^{1/4} \log d}{4}\Big\} \leq \exp\Big\{d^{1/4} \log n - C d^{5/4} \eta^2\Big\}.
\end{equation}
We decompose $f_\Theta(a)$ in two parts, which we control separately:
\begin{equation}\label{eq:decomposition_f}
    f_\Theta(a) = \underbrace{\sum_{i \in S(\eta)} [\Theta^{-1}y]_i U(a)_i}_{\eqqcolon f_1(\eta,a)} + \underbrace{\sum_{i \notin S(\eta)} [\Theta^{-1}y]_i U(a)_i}_{\eqqcolon f_2(\eta,a)}.
\end{equation}
First, we have that under the event $E$ of eq.~\eqref{eq:E_high_p_event}, and by the Cauchy-Schwarz inequality:
\begin{equation*}
    |f_1(\eta, a)| \leq C d^{-3/8} \sum_{i \in S(\eta)} \langle \omega_i, a \rangle^2 \leq C d^{-3/8} |S(\eta)|.
\end{equation*}
Let us pick $\eta = d^{-1/8} t$, for some $t \geq 1$ (so that $d \eta^2 \to \infty$).
Using eq.~\eqref{eq:ub_S} in the previous inequality, as well as the law of total probability (and $\bbP[E] \geq 1/2$), we reach:
\begin{equation}
    \label{eq:control_f1}
     \bbP\big[|f_1(d^{-1/8} t, a)| \geq C_1 d^{-1/8}\big| E \big] \leq 
    2\exp\{d^{1/4} \log n - C_2 d t^2\}.
\end{equation}
We now control $f_2(\eta, a)$.
For a random variable $X(\{y_i, \omega_i\})$, we denote $\|X\|_{\psi_2,y}$ the sub-Gaussian norm of the random variable with respect to the randomness of $\{y_i\}$ only (i.e.\ conditioned on the value of $\{\omega_i\}$).
Since $y_i$ are independent of $\{\omega_i\}$ (and thus of the choice of the set $S(\eta)$ and of $\Theta$), 
we get by Hoeffding's inequality (recall that $y_i$ are i.i.d.\ $K/\sqrt{d}$-sub-Gaussian), that for all 
$\{\omega_i\}$:
\begin{equation*}
    \|f_2(\eta, a)\|_{\psi_2,y}^2 \leq \frac{C}{d} \|\Theta^{-1} \widetilde{U}(a)\|_2^2.
\end{equation*}
Here we denoted $\widetilde{U}(a)_i \coloneqq \langle \omega_i, a \rangle^2 \indi\{|\langle \omega_i, a \rangle| \leq \eta\}$.
Therefore:
\begin{equation}
    \label{eq:psi2_f2}
    \|f_2(\eta, a)\|_{\psi_2,y}^2 \leq \frac{C \|\Theta^{-1}\|_\op^2}{d} \sum_{i \notin S(\eta)} \langle \omega_i, a \rangle^4. 
\end{equation}
We can then prove (see Section~\ref{subsec:proof_control_I}):
\begin{lemma}\label{lemma:control_I}
    \noindent
    For all $q \in [1/2, 1]$, there is a constant $C = C(q) > 0$ such that 
    for all $v \geq 0$, and all $\eta \in (0,1)$: 
    \begin{equation*}
        \bbP\Bigg[\sum_{i \notin S(\eta)} \langle \omega_i , a \rangle^4 \geq \frac{n}{d^2}(3+v)\Bigg] \leq 
        2 \exp\Bigg\{-C \min\Bigg(\frac{n d^{2/q} \eta^{4/q}}{d^4 \eta^8} v^2, n^q d^{1-2q} \eta^{2-4q} v^q\Bigg) \Bigg\}.
    \end{equation*}
\end{lemma}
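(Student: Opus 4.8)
The plan is to reduce the statement to a sharp concentration bound for a sum of i.i.d.\ bounded nonnegative random variables, of Fuk--Nagaev type, and then to calibrate the truncation level in that bound as a function of $q$.

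\textbf{Step 1: reduction.} Since $i\notin S(\eta)$ is exactly the event $\{|\langle\omega_i,a\rangle|\le\eta\}$, we may write $\sum_{i\notin S(\eta)}\langle\omega_i,a\rangle^4=\sum_{i=1}^n Z_i$, where $Z_i\coloneqq\langle\omega_i,a\rangle^4\indi\{|\langle\omega_i,a\rangle|\le\eta\}$ are i.i.d., satisfy $0\le Z_i\le\eta^4$, and $\EE Z_i\le\EE\langle\omega_1,a\rangle^4=3/(d(d+2))\le 3/d^2$. Hence $\EE\sum_i Z_i\le 3n/d^2$, and it is enough to bound $\bbP[\sum_i(Z_i-\EE Z_i)\ge nv/d^2]$. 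Two preliminary observations simplify matters. First, $\sum_i Z_i\le n\eta^4$, so the event in the statement is empty unless $d^2\eta^4\ge 3+v$; thus we may assume $\rho\coloneqq d\eta^2\ge\sqrt 3$. Second, if $\min(A_q,B_q)\le C_0$ (with $C_0$ the implicit constant of the claim) the bound is trivial, so we may assume $\min(A_q,B_q)$, and in particular $B_q=(nv)^q\rho^{1-2q}$, is large; this forces $nv\gtrsim\rho^{2-1/q}\ge 1$.

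\textbf{Step 2: truncation and dyadic slicing.} Fix a level $u\in(0,\eta^4]$. Split $Z_i=Z_i\indi\{Z_i\le u\}+Z_i\indi\{Z_i>u\}$. The summands $Z_i\indi\{Z_i\le u\}$ lie in $[0,u]$ and have variance at most $\EE Z_1^2\le\EE\langle\omega_1,a\rangle^8=105/(d(d+2)(d+4)(d+6))\le 105/d^4$, so Bernstein's inequality gives
\begin{equation*}
    \bbP\Big[\sum_{i=1}^n\big(Z_i\indi\{Z_i\le u\}-\EE[Z_i\indi\{Z_i\le u\}]\big)\ge\tfrac{nv}{2d^2}\Big]\le\exp\Big(-c\min\big(nv^2,\ \tfrac{nv}{d^2u}\big)\Big).
\end{equation*}
For the other part, slice $(u,\eta^4]$ dyadically into $I_\ell\coloneqq(2^{-\ell-1}\eta^4,2^{-\ell}\eta^4]$, $0\le\ell\le L\coloneqq\lceil\log_2(\eta^4/u)\rceil$, and set $N_\ell\coloneqq\#\{i:Z_i\in I_\ell\}$, so that $\sum_i Z_i\indi\{Z_i>u\}\le\sum_{\ell=0}^L 2^{-\ell}\eta^4 N_\ell$. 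Each $N_\ell$ is $\mathrm{Bin}(n,p_\ell)$ where, using the sub-Gaussian tail $\bbP[|\langle\omega_1,a\rangle|\ge s]\le 2e^{-cds^2}$ (equivalent to the $C/\sqrt d$-sub-Gaussianity recorded above), $p_\ell\le 2\exp(-c\,2^{-\ell/2}\rho)$. Allocating the budget $\tfrac{nv}{2d^2}$ across the slabs proportionally to $2^{-\ell/2}$ and bounding each $\bbP[N_\ell\ge\cdot]$ by the Poisson-domination Chernoff estimate already used for $|S(\eta)|$ in eq.~\eqref{eq:ub_S_tail}, one gets $\bbP[\sum_i Z_i\indi\{Z_i>u\}\ge\tfrac{nv}{2d^2}]\le(L+1)\exp(-c\,\tfrac{nv}{d^2u})$, provided the slicing is stopped at scale $u\gtrsim 1/d^2$ (below which $p_\ell$ ceases to be small and the slab is absorbed into the Bernstein part).

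\textbf{Step 3: calibration and conclusion.} Choose $u=u_q\coloneqq\min\{\eta^4,\ \max(c_1/d^2,\ (nv)^{1-q}\rho^{2q-1}/d^2)\}$. By construction $\tfrac{nv}{d^2u_q}\ge c\,(nv)^q\rho^{1-2q}=cB_q$ — an equality in the main branch, and using $nv\ge B_q$ (which follows from $nv\ge 1$ and $q\ge 1/2$) in the branch $u_q=c_1/d^2$; moreover, in the non-trivial regime the dyadic depth is $L=O(\log\rho)=O(\log d)$, so $(L+1)$ is absorbed into the exponent. Combining the two parts of Step 2, and using $nv^2\ge A_q$ since $A_q=nv^2\rho^{2/q-4}\le nv^2$ for $\rho\ge 1$, we conclude
\begin{equation*}
    \bbP\Big[\sum_{i=1}^n(Z_i-\EE Z_i)\ge\tfrac{nv}{d^2}\Big]\le\exp(-c\,nv^2)+(L+1)\exp(-cB_q)\le 2\exp\big(-C(q)\min(A_q,B_q)\big),
\end{equation*}
which is the assertion. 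The conceptual skeleton is short, but the real work is in Step~3: one must check that with this calibration of $u_q$ and the dyadic budget the binomial tails for the $N_\ell$ genuinely combine to $\exp(-cB_q)$ \emph{uniformly} over $q\in[1/2,1]$, $v\ge 0$ and $\eta\in(0,1)$. This demands a careful case analysis — verifying that when $\min(A_q,B_q)$ is large the Chernoff bounds for the $N_\ell$ lie in their effective range (the bulk of the bookkeeping), treating separately the boundary case $u_q=\eta^4$ (where the dyadic part disappears and $\sum_i Z_i$ is literally bounded by $\eta^4$), and tracking the absorption of the $O(\log d)$ slab count — and this matching of exponents across the whole parameter range is the main obstacle; everything else is the standard truncation argument for heavy-tailed sums.
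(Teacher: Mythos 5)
Your Step 1 matches the paper's setup exactly, but from there the routes diverge and yours is left with a genuine gap. You propose to re-derive a Fuk--Nagaev-type inequality from scratch: truncate at a level $u$, apply Bernstein to the bounded part, and control the excess via dyadic slicing of $(u,\eta^4]$ with binomial counts $N_\ell$, then calibrate $u=u_q$. The Bernstein half is fine (and your observations $A_q\le nv^2$ and $nv/(d^2u_q)\gtrsim B_q$ are correct), but the entire content of the lemma is concentrated in the claim that the slab tails $\bbP[N_\ell\ge\cdot]$ combine to $(L+1)\exp(-c\,nv/(d^2u))$ uniformly in $q,v,\eta$ --- and you state yourself that verifying this is ``the main obstacle'' and do not carry it out. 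As written this is an assertion, not a proof: at the finest scales the required bound $m_\ell\log(m_\ell/(e\lambda_\ell))\gtrsim nv/(d^2u)$ forces $\log(m_\ell/\lambda_\ell)\gtrsim 2^{\ell/2}$, which is not automatic from $\lambda_\ell\le 2n\exp(-c2^{-\ell/2}\rho)$ and needs exactly the case analysis you defer; moreover, absorbing the factor $L+1=O(\log d)$ into the exponent fails in the regime where $\min(A_q,B_q)$ exceeds the constant $C_0$ but is $o(\log\log d)$, so the per-slab bounds would have to be made summable rather than all equal. The proposal identifies the right quantities but does not prove the bound.

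The gap is avoidable, and the paper's proof shows how: instead of truncating the sum, combine the two pieces of information you already have about a \emph{single} summand --- the $\psi_{1/2}$ tail $\bbP[z_i\ge t]\le 2e^{-Cd\sqrt t}$ and the deterministic bound $z_i\le\eta^4$ --- into one tail estimate. For $t\le\eta^4$ and $q\in[1/2,1]$ one has $\sqrt t=t^q\,t^{1/2-q}\ge t^q\eta^{2-4q}$, hence $\bbP[z_i\ge t]\le 2\exp\{-Cd\eta^{2-4q}t^q\}$ for all $t$ (the bound being trivial for $t>\eta^4$). After rescaling by $d^{1/q}\eta^{2/q-4}$, the summands are therefore $\psi_q$ with an $O(1)$ constant, and Lemma~\ref{lemma:tail_sum_sub_Weibull} --- which the paper already invokes in the proof of Lemma~\ref{lemma:high_p_events} --- immediately yields the $\min(nt^2,(nt)^q)$ exponent; substituting $t=v\,d^{1/q-2}\eta^{2/q-4}$ gives exactly $\min(A_q,B_q)$. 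In other words, the dyadic bookkeeping you rightly identify as the hard part is precisely what the off-the-shelf sub-Weibull Bernstein inequality packages away, once one notices the elementary interpolation between the sub-Weibull tail and the truncation level.
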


\subsection{Ending the proof}

We detail now how the combination of eq.~\eqref{eq:control_f1} and Lemma~\ref{lemma:control_I} allows to complete the proof.
By Lemma~\ref{lemma:truncation_centering}, our task reduced to show that for a $1/2$-net $(a_j)_{j=1}^N$ of $\mcS^{d-1}$, we have with probability at least $1 - Cn^{-\beta}$, and assuming $n \leq \alpha d^2$ for $\alpha = \alpha(\beta)$ small enough:
\begin{equation}\label{eq:to_show_f_Theta}
    \max_{j \in [N]} |f_\Theta(a_j)| \leq 1/4.
\end{equation}
Recall the decomposition of eq.~\eqref{eq:decomposition_f}. 
We fix $\eta = d^{-1/8} t$, for $t \geq 1$ large enough (not depending on $n, d$) such that eq.~\eqref{eq:control_f1} gives, for $n,d$ large enough:
\begin{equation}
    \label{eq:control_f1_2}
     \bbP\big[|f_1(d^{-1/8} t, a)| \geq C d^{-1/8}\big| E\big] \leq 
     10^{-d}.
\end{equation}
By Lemma~\ref{lemma:control_I} and eq.~\eqref{eq:psi2_f2} we have, choosing $v = 1$ and $q = 3/5$\footnote{This is an arbitrary choice, the only requirement needed is actually that $q \in (1/2,3/4)$.}, 
that for all $x > 0$:
\begin{align*}
     \bbP\Bigg[|f_2(d^{-1/8} t, a)| \geq x \|\Theta^{-1} \|_\op \sqrt{\frac{n}{d^2}} \Bigg] &\leq 
     \EE_\omega \Bigg[\exp\Bigg(- \frac{Cn x^2}{d\sum_{i \notin S(\eta)} \langle \omega_i , a \rangle^4}\Bigg)\Bigg], \\ 
     &\aleq 
     2 \exp\Bigg\{\!\!-C_1 \!\min\!\Bigg(\frac{n}{t^{4/3} \sqrt{d}} , n^{3/5} d^{-3/20} t^{-2/5} \!\Bigg)\! \Bigg\}\!
     + \exp(- C_2d x^2), \\
     &\bleq 
     2 \exp\Big\{-C_1 n^{3/5} d^{-3/20} t^{-2/5} \Big\}
     + \exp(- C_2d x^2), \\
     &\cleq 
     10^{-d}
     + \exp(- C_2d x^2),
\end{align*}
where we used Lemma~\ref{lemma:control_I} in $(\rm a)$ with $v = 1$ and $q = 3/5$ (and bounding $e^{-z} \leq 1$), 
in $(\rm b)$ the fact that $n/\sqrt{d} = \omega(n^{3/5} d^{-3/20})$ since $n = \omega(d)$, 
and finally in $(\rm c)$ we used that $n = \omega(d^{23/12})$, so that we can bound the first term by $10^{-d}$ for $n,d$ large enough.
We fix $x > 0$ large enough (not depending on $n, d$) such that the second term also
satisfies $\exp(-C_2 d x^2) \leq 10^{-d}$.
All in all, we get:
\begin{equation*}
     \bbP\Bigg[|f_2(d^{-1/8} t, a)| \geq C \|\Theta^{-1}\|_\op \sqrt{\frac{n}{d^2}}\Bigg] \leq 2 \times 10^{-d}.
\end{equation*}
And thus:
\begin{equation}
    \label{eq:control_f2}
     \bbP\Bigg[|f_2(d^{-1/8} t, a)| \geq C \sqrt{\frac{n}{d^2}}\Bigg| E\Bigg] \leq 
     \frac{\bbP\Big[|f_2(d^{-1/8} t, a)| \geq C \|\Theta^{-1}\|_\op \sqrt{\frac{n}{d^2}}\Big]}{\bbP[E]} \leq 
     3 \times 10^{-d}
\end{equation}
Notice that the event $E$ of eq.~\eqref{eq:E_high_p_event} is independent of the net. Thus, we have for all $u > 0$: 
\begin{equation}
\label{eq:end_1}
    \bbP\Big[ \max_{j \in [N]} |f_\Theta(a_j)| \geq u\Big] 
    \leq C n^{-\beta} + \bbP\Big[ \max_{j \in [N]} |f_\Theta(a_j)| \geq u \Big| E\Big].
\end{equation}
Combining eqs.~\eqref{eq:control_f1_2} and \eqref{eq:control_f2} with the union bound (recall $N \leq 5^d$) we get:
\begin{equation}
\label{eq:end_2}
    \bbP\Bigg[ \max_{j \in [N]} |f_\Theta(a_j)| \geq C_1 \sqrt{\frac{n}{d^2}} + C_2 d^{-1/8} \Bigg| E \Bigg] 
    \leq 4 \cdot 5^d \cdot 10^{-d} \leq 4 \cdot 2^{-d}.
\end{equation}
By combining eqs.~\eqref{eq:end_1} and eq.~\eqref{eq:end_2},
taking $d$ large enough, and $n/d^2$ small enough, this ends the proof of eq.~\eqref{eq:to_show_f_Theta}, and thus of Theorem~\ref{thm:main}.

\section{Auxiliary proofs}\label{sec:proofs_more}
\subsection{Proof of Lemma~\ref{lemma:operator_norm_Theta}}\label{subsec:proof_lemma_operator_norm}

We use the matrix flattening function, 
for $M \in \mcS_d$:
\begin{align*}
    \flatt(M) &\coloneqq ((\sqrt{2}M_{ab})_{1 \leq a < b \leq d}, (M_{aa})_{a=1}^d) \in \bbR^{d(d+1)/2}, \\ 
    &= ((2 - \delta_{ab})^{1/2} M_{ab})_{a\leq b}.
\end{align*}
It is an isometry: $\langle \flatt(M), \flatt(N) \rangle = \Tr[MN]$.
Note that $\Theta$ is the Gram matrix of the i.i.d.\ vectors $X_i \coloneqq \flatt(x_i x_i^\T) \in \bbR^p$, 
with $p \coloneqq d(d+1)/2$.

\myskip 
\textbf{Centering --}
Note that $\|X_i\|_2 = \|x_i\|_2^2 = 1$.
Moreover, we have\footnote{We identify the matrices and their flattened versions.} $\EE[X_i] = \Id_d / d$, 
and if $Y_i \coloneqq X_i - \EE[X_i]$, then 
$\langle Y_i, Y_j \rangle = \langle X_i, X_j \rangle - 1/d$.
Therefore, we can write 
\begin{equation*}
    \Theta = H + \frac{1}{d} \ones_n \ones_n^\T,
\end{equation*}
with $H_{ij} \coloneqq \langle Y_i , Y_j \rangle$ the Gram matrix of the $(Y_i)_{i=1}^n$. 
We also sometimes denote $H = Y^\T Y$, with $Y$ the matrix whose columns are given by $Y_1, \cdots, Y_n$.
Note that $\EE[\Theta] = (1-1/d)\Id_n + (1/d) \ones_n \ones_n^\T$.
Thus, to prove Lemma~\ref{lemma:operator_norm_Theta} it suffices to show that with the required probability bound:
\begin{equation}\label{eq:to_show_H}
    \| H - \Id_n\|_\op \leq
    \frac{C_1}{d} + C_2(\beta) \Bigg(\sqrt{\frac{n}{d^2}} + \frac{n}{d^2}\Bigg).
\end{equation}
\textbf{Projecting --}
Note that $\langle Y_i, \flatt(\Id_d) \rangle = 0$, so that $Y_i \in \{\flatt(\Id_d)\}^\perp$.
We denote $P$ the orthogonal projector onto $\{\flatt(\Id_d)\}^\perp$, i.e.\  
\begin{equation}\label{eq:def_P}
    P \coloneqq \Id_p - \frac{1}{d} \flatt(\Id_d) \flatt(\Id_d)^\T.
\end{equation}
We remark that $(P Y_i)_{i=1}^n$ are still i.i.d., 
centered, and we have $\langle P Y_i , P Y_j \rangle = \langle Y_i, Y_j \rangle$.

\myskip
\textbf{Rescaling --}
Note that $\EE[Y_i] = 0$, and without loss of generality (up to using the vectors $Y'_i \coloneqq \eps_i Y_i$ 
with $\eps_i \iid \mathrm{Unif}(\{\pm 1\})$, for which the Gram matrix $H'$ satisfies $H' = \mathrm{Diag}(\eps) H \mathrm{Diag}(\eps)$ and has thus the same eigenvalues as $H$) we can assume the $Y_i$ to be symmetric. 

\myskip 
Let us compute the covariance of $Y$.
For $a \leq b$ and $c \leq d$, we have 
\begin{align}
    \nonumber
    \EE[Y_{ab} Y_{cd}] &= [(2 - \delta_{ab})(2-\delta_{cd})]^{1/2} \Bigg[\EE(x_a x_b x_c x_d) - \frac{\delta_{ab} \delta_{cd}}{d^2}\Bigg], \\
    \nonumber
    &\aeq \frac{[(2 - \delta_{ab})(2-\delta_{cd})]^{1/2}}{d^2} \Bigg[\frac{d}{d+2}(\delta_{ab}\delta_{cd} + \delta_{ac} \delta_{bd} + \delta_{abcd}) - \delta_{ab} \delta_{cd}\Bigg], \\ 
    \nonumber
    &= \frac{1}{d^2} \Bigg[\frac{d}{d+2} \Bigg(\delta_{abcd} + [(2 - \delta_{ab})(2-\delta_{cd})]^{1/2} \delta_{ac} \delta_{bd} \Bigg) - \frac{2}{d+2} \delta_{ab} \delta_{cd}\Bigg], \\
    \label{eq:cov_Y}
    &= \frac{2}{d^2} \Bigg[\frac{d}{d+2}\delta_{ac}\delta_{bd} - \frac{1}{d+2} \delta_{ab} \delta_{cd}\Bigg].
\end{align}
In $(a)$ we used the marginals of uniformly sampled random vectors on $\mcS^{d-1}$, which can easily be obtained e.g.\ by using hyperspherical coordinates\footnote{
    The two moments needed are $d^2 \EE[x_1^4] = 3 d/ (2 + d)$ 
    and $d^2 \EE[x_1^2 x_2^2] = d / (d+2)$.
}.
In matrix notation, eq.~\eqref{eq:cov_Y} can be rewritten as:
\begin{align*}
    \EE[Y Y^\T] &= \frac{2}{d^2} \Bigg[\frac{d}{d+2}\Id_p - \frac{1}{d+2} \flatt(\Id_d) \flatt(\Id_d)^\T\Bigg], \\ 
    &= \frac{2}{d(d+2)} P.
\end{align*}
Therefore, if we denote $V_i \coloneqq P Y_i \in \bbR^{p-1}$ the coordinates of $Y_i$ in $\{\flatt(\Id_d)\}^\perp$, we have that 
$\langle V_i, V_j \rangle = \langle Y_i, Y_j \rangle$, and
\begin{equation*}
    \EE[V V^\T] = \frac{2}{d(d+2)} \Id_{p-1}.
\end{equation*}
Denote
\begin{equation}
    \label{eq:def_Sigma}
    \Sigma\coloneqq (p-1) \EE[V V^\T] = \frac{2(p-1)}{d(d+2)} \Id_{p-1} = \Bigg(1 - \frac{1}{d}\Bigg) \Id_{p-1}.
\end{equation}
In particular 
$\|\Sigma - \Id_{p-1}\|_\op \leq (1/d)$.
Letting $Z \coloneqq \Sigma^{-1/2} V$, the vector $Z$ satisfies $\EE[ZZ^\T] = (p-1)^{-1} \Id_{p-1}$, and 
the Gram matrix $H_Z$ of $Z_1, \cdots, Z_n$ satisfies $H - H_Z = Z^\T (\Sigma-\Id_{p-1}) Z$, 
and thus for all $w \in \bbR^{p-1}$:
\begin{align*}
    |w^\T H_Z w - w^\T H w| &= |w^\T Z^\T (\Sigma-\Id_{p-1}) Z w|, \\
    &\leq (1/d) \|Z w \|_2^2, \\ 
    &= (1/d) w^\T H_Z w.
\end{align*}
Therefore, $\| H - H_Z\|_\op \leq (1/d) \|H_Z \|_\op$.
By the triangle inequality, this yields that
\begin{equation}
    \label{eq:relation_H_HZ}
    \| H - \Id_n \|_\op \leq \frac{1}{d} + \Big(1 + \frac{1}{d}\Big) \|H_Z - \Id_n\|_\op.
\end{equation}
Using eq.~\eqref{eq:to_show_H} and eq.~\eqref{eq:relation_H_HZ}, it is clear that 
we conclude to eq.~\eqref{eq:concentration_Theta}, it is enough to show that (with the required probability bound):
\begin{equation}\label{eq:to_show_HZ}
    \| H_Z - \Id_n \|_\op \leq \frac{6}{d} + C(\beta) \Bigg(\sqrt{\frac{n}{d^2}} + \frac{n}{d^2}\Bigg).
\end{equation}

\myskip
\textbf{Gram matrix estimation --}
We will use the results of \cite{bartl2022random}. 
We need to introduce the definition of a well-behaved random vector:
\begin{definition}[Well-behaved vector]\label{def:well_behaved}
    \noindent
    Let $q \geq 1$.
    A random vector $X \in \bbR^q$ is said to be well-behaved for $n \geq 1$ with constants $L, R > 0$, $\alpha \in (0,2]$, $\delta \in [0,1]$ and $\gamma \in [0,1)$ if:
    \begin{itemize}
        \item[$(i)$] $X$ is symmetric and isotropic: $\EE[XX^\T] = \Id_q$.
        \item[$(ii)$] If one considers $n$ i.i.d.\ draws $X_1, \cdots, X_n$, then with probability at least $1 - \gamma$: 
        \begin{equation*}
            \max_{1 \leq i \leq n} \Bigg|\frac{\|X_i\|_2^2}{q} - 1\Bigg| \leq \delta.
        \end{equation*}
        \item[$(iii)$] For all $2 \leq k \leq R \log n$ and all $t \in \bbR^q$:
        \begin{equation*}
            \|\langle X, t \rangle\|_{L_k} \leq L k^{1/\alpha}  \|\langle X, t \rangle\|_{L_2} = L k^{1/\alpha}  \|t\|_2.
        \end{equation*}
    \end{itemize}
\end{definition}
\noindent
Condition $(iii)$ corresponds to some $\psi_\alpha$ behavior of the projections, uniformly in $t$, and for some $\alpha \in (0,2]$, 
but only up to moments $k = \mcO(\log n)$.
We can now state an immediate corollary to Theorem~1.5 of \cite{bartl2022random} (precisely the particular case corresponding to $T$ being the unit sphere):
\begin{corollary}[\cite{bartl2022random}]\label{thm:shahar}
    \noindent
    Let $n, q \geq 1$.
    Let $\beta \geq 1$.
    Assume that the random vector $A\in \bbR^q$ is well-behaved with respect to $n$ according to Definition~\ref{def:well_behaved}, 
    with constants $L, R = R(\beta), \alpha, \gamma, \delta$. 
    Let $M \in \bbR^{q \times n}$ be a matrix with i.i.d.\ columns $A_1, \cdots A_n$.
    Then, with probability at least $1 - \gamma - 2\exp(- c_0 n) - n^{-\beta}$, we have 
    \begin{equation*}
        \Bigg\|\frac{1}{q}M^\T M - \Id_n\Bigg\|_\op \leq 2 \delta + c(L, \alpha, \beta) \Bigg(\sqrt{\frac{n}{q}} + \frac{n}{q}\Bigg).
    \end{equation*}
\end{corollary}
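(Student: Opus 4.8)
The plan is to obtain Corollary~\ref{thm:shahar} directly from Theorem~1.5 of \cite{bartl2022random} by specialising its index set to the sphere. Theorem~1.5 controls, for a well-behaved vector $A$ and an arbitrary $T \subseteq \bbR^n$, the fluctuation of the quadratic form $a \mapsto q^{-1}\|Ma\|_2^2$ around $\|a\|_2^2$, uniformly over $a \in T$. The first step is to rewrite the quantity we want as exactly such a fluctuation: since $q^{-1}M^\T M - \Id_n$ is symmetric,
\begin{equation*}
    \Big\|\tfrac1q M^\T M - \Id_n\Big\|_\op = \sup_{a \in \mcS^{n-1}} \Big|a^\T\big(\tfrac1q M^\T M - \Id_n\big)a\Big| = \sup_{a \in \mcS^{n-1}} \Big|\tfrac1q \|Ma\|_2^2 - 1\Big|,
\end{equation*}
so that it suffices to invoke Theorem~1.5 with $T = \mcS^{n-1}$ (the radius-one Euclidean sphere in $\bbR^n$), which is the ``particular case'' referred to in the statement.

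Second, I would check that the hypotheses of Theorem~1.5 hold with the advertised parameters. Conditions $(i)$ and $(iii)$ of Definition~\ref{def:well_behaved} are precisely the isotropy and $\psi_\alpha$-moment-growth requirements of \cite{bartl2022random}, while condition $(ii)$ is exactly the probability-$(1-\gamma)$ event on which the norms $\|A_i\|_2^2/q$ lie within $\delta$ of $1$; this is where both the term $\gamma$ in the failure probability and the term $2\delta$ in the conclusion come from. The residual failure probability $2\exp(-c_0 n) + n^{-\beta}$ and the constant $c(L,\alpha,\beta)$ are then inherited verbatim from the conclusion of Theorem~1.5, the $\beta$-dependence entering through the moment cutoff $R\log n$ in Definition~\ref{def:well_behaved}$(iii)$.

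Third — and this is the only step that is not pure bookkeeping — I would evaluate the complexity-dependent part of the bound of Theorem~1.5 at $T = \mcS^{n-1}$. In its general form this part involves ``size'' functionals of $T$: a Gaussian-width / $\gamma_2$-type quantity, scaled by $q^{-1/2}$ and by the radius of $T$, together with a higher-order $\gamma_\alpha$-type (or metric-entropy) quantity scaled by an appropriate power of $q^{-1/2}$. For the full sphere these reduce, via the standard estimates (Gaussian width of $\mcS^{n-1}$ of order $\sqrt n$, and metric entropy of $\mcS^{n-1}$ at scale $\eta$ of order $n\log(1/\eta)$), to one term of order $\sqrt{n/q}$ and one term of order $n/q$; since $\sup_{a\in\mcS^{n-1}}\|a\|_2^2 = 1$, the $\delta$-term is exactly $2\delta$. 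Assembling these gives the claimed bound $2\delta + c(L,\alpha,\beta)\big(\sqrt{n/q} + n/q\big)$.

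The main obstacle is therefore to match the precise shape of the estimate in \cite{bartl2022random} (its normalisation, whether it is phrased through Gaussian widths, $\gamma_\alpha$ functionals, or covering numbers, and whether spurious $\log$ factors appear) to the clean two-term expression $\sqrt{n/q}+n/q$, and to confirm that $T=\mcS^{n-1}$ saturates each functional at the stated order. No probabilistic input beyond \cite{bartl2022random} is needed; the corollary is genuinely a direct specialisation.
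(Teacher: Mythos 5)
Your proposal is correct and follows essentially the same route as the paper: the corollary is obtained by specialising Theorem~1.5 of \cite{bartl2022random} to $T = \mcS^{n-1}$, identifying the operator norm with the supremum of the quadratic-form fluctuation over the sphere, reading off $\ell_\star(T) \simeq \sqrt{n}$, $d_T = 1$, $k_\star(T) \simeq n$ (whence the two terms $\sqrt{n/q}$ and $n/q$ and the $2\delta$ term), and accounting for the $q^{-1}$ normalisation of the Gram matrix. The paper's own justification is exactly this bookkeeping and no more.
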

\noindent
Corollary~\ref{thm:shahar} is an application of Theorem~1.5 of \cite{bartl2022random}, 
for the simplest case in which $T = \mcS^{n-1}$, so that the Gaussian width is $\ell_\star(T) \coloneqq \EE\|g\|_2 \simeq \sqrt{n}$ (for $g \sim \mcN(0, \Id_n)$), 
$d_T \coloneqq \sup_{t \in \mcS^{n-1}}\|t\| = 1$, and $k_\star(T) \coloneqq (\ell_\star(T)/d_T)^2 \simeq n$.
More precisely, we have $(1+ \mcO(n^{-1})) n \leq n^2 / (n+1) \leq k_\star(T) \leq n$.
Note as well that we added the factor $p^{-1}$ in front of the Gram matrix $M^\T M$ (it is implicit in \cite{bartl2022random} because the columns of $M$ there are $A_i / \sqrt{p}$).

\myskip
\textbf{An important remark --}
We emphasize a technical point, related to the final probability bounds we obtain in Theorem~\ref{thm:main}.
In what follows, we will apply Corollary~\ref{thm:shahar} with $R = \infty$, as the moment bound will be valid for all orders. In this context, the analysis of \cite{bartl2022random} would naturally imply that Corollary~\ref{thm:shahar} holds with probability at least $1 - \gamma - 2 \exp(-c_0 n)$, and with a constant $c(L, \alpha)$ not depending on $\beta$. In turn, a more careful analysis would reveal that the probability bound of Theorem~\ref{thm:main} can be made exponentially small in $d$. However, as proving this would require a possibly lengthy technical analysis of the arguments of \cite{bartl2022random}, for reasons of clarity we chose to restrict to the most direct application of Theorem~1.5 of \cite{bartl2022random}, which gives then a suboptimal polynomial probability upper bound.

\myskip
In order to deduce eq.~\eqref{eq:to_show_HZ} from Corollary~\ref{thm:shahar}, with the dimension $q = p-1$ (recall $p = d(d+1)/2$), 
we need to verify that the distribution of the columns $Z_i$ is well-behaved for some $\alpha, L, R, \delta, \gamma$.
We let $A_i \coloneqq \sqrt{q} Z_i$, and we check that it satisfies Definition~\ref{def:well_behaved}.

\myskip 
\textbf{Condition~($i$) -- } Because of the random sign that we can add wlog, we have seen that the distribution of $A$ is symmetric.
Moreover, by our analysis above, $\EE[A A^\T] = q \EE[ZZ^\T] = \Id_q$, so that $A$ is isotropic.

\myskip 
\textbf{Condition~($ii$) -- } 
Notice that for all $i$, $\|Y_i\|_2^2 = \|V_i\|_2^2 = 1 - 1/d$.
Thus, with the notations from above:
\begin{align*}
    \Bigg|\frac{1}{q}\|A\|_2^2 - 1\Bigg| &= \Bigg|V^\T (\Sigma^{-1} - \Id_q) V + \frac{1}{d}\Bigg|, \\ 
    &\leq \|V\|_2^2 \| \Sigma^{-1} - \Id_q \|_\op + \frac{1}{d}, \\ 
    &\aleq \frac{3}{d}.
\end{align*}
In $(\rm a)$ we used that 
\begin{equation*}
    \|\Sigma - \Id_q\|_\op \leq \frac{1}{d} \Rightarrow \|\Sigma^{-1} - \Id_q\|_\op \leq \frac{\frac{1}{d}}{1 - \frac{1}{d}} \leq \frac{2}{d}.
\end{equation*}
Thus, $A$ satisfies the condition $(ii)$ with $\gamma = 0$ and $\delta = 3/d$ (since the bound is deterministic, there is no need to consider $n$ i.i.d.\ samples).

\myskip 
\textbf{Condition~($iii$) -- } 
We are going to see that it actually holds for all $k \geq 2$ with $\alpha = 1$, i.e.\ the random vector $A$ is uniformly sub-exponential.
Let $t \in \bbR^q$.
Then\footnote{Again, since $\|\Sigma-\Id_q\|_\op \leq 1/d \Rightarrow \|\Sigma^{-1/2}-\Id_q\|_\op \leq 2/d$.}: 
\begin{align*}
    |\langle A, t \rangle - \sqrt{q} \langle V, t\rangle| &= |\sqrt{q} V^\T (\Sigma^{-1/2} - \Id_q) t| , \\ 
    &\leq \sqrt{q} \|V\|_2  \times \frac{2}{d} \times \|t\|_2, \\ 
    &\aleq C \|t\|_2,
\end{align*}
using in $(\rm a)$ that $q + 1 = d(d+1)/2$ and that $\|V\|_2^2 = \|Y\|_2^2 = \Tr[(xx^\T - \Id_d/d)^2] = 1 - 1/d \leq 1$.
We have then for all $k \geq 2$: 
\begin{align*}
    \|\langle A, t \rangle\|_k &\aleq 2\Bigg[q^{k/2}\|\langle V, t \rangle\|_k^k + C^k \|t\|_2^k\Bigg]^{1/k} , \\
     &\bleq 2\Bigg[\sqrt{q}\|\langle V, t \rangle\|_k + C \|t\|_2\Bigg],
\end{align*}
using in $(\rm a)$ that $(x+y)^k \leq 2^{k-1}(x^k + y^k)$ for $x, y > 0$, and in $(b)$ Minkowski's inequality 
$(x+y)^{1/k} \leq (x^{1/k} + y^{1/k})$.
Therefore, it is enough to check that for all $k \geq 2$:
\begin{equation}\label{eq:to_check_condition_iii}
    \|\langle V, t \rangle\|_k \leq \frac{L}{d} k^{1/\alpha} \|t\|_2,
\end{equation}
for some $\alpha \in (0,2]$.
We will use the Hanson-Wright inequality for random vectors on the sphere: 
\begin{lemma}[Hanson-Wright]\label{lemma:Hanson_Wright}
    \noindent
    Let $d \geq 1$ and $x \sim \Unif(\mcS^{d-1})$.
    For any $M \in \mcS_d$ and any $u > 0$: 
    \begin{equation*}
        \bbP\Big[\Big|d x^\T M x - \Tr[M]\Big| \geq u\Big] \leq 2 \exp\Bigg\{-C \min \Bigg(\frac{u^2}{\|M\|_F^2}, \frac{u}{\|M\|_\op}\Bigg)\Bigg\}.
    \end{equation*}
\end{lemma}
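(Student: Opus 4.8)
The plan is to reduce to the Gaussian Hanson--Wright inequality via the representation $x = g/\|g\|_2$ with $g \sim \mcN(0,\Id_d)$, after first using $\|x\|_2^2=1$ to remove the trace.

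First I would decenter $M$. Set $M_0 \coloneqq M - (\Tr[M]/d)\Id_d$, so that $\Tr[M_0]=0$, and since $\|x\|_2^2 = 1$ on $\mcS^{d-1}$ we get the exact identity $d\,x^\T M x - \Tr[M] = d\,x^\T M_0 x$. Moreover $\|M_0\|_F \leq \|M\|_F$ (orthogonal projection onto the subspace of traceless matrices) and $\|M_0\|_\op \leq \|M\|_\op + |\Tr[M]|/d \leq 2\|M\|_\op$ (using $|\Tr[M]| \le d\|M\|_\op$). Hence it suffices to establish the bound for traceless $M$, with $\|M\|_F,\|M\|_\op$ replaced by $\|M_0\|_F,\|M_0\|_\op$; this costs only a universal factor in the exponent. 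So from now on assume $\Tr[M]=0$.

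Next, with $g \sim \mcN(0,\Id_d)$ and $x = g/\|g\|_2$, one has $d\,x^\T M x = (d/\|g\|_2^2)\, g^\T M g$. On the event $\mcG \coloneqq \{\|g\|_2^2 \geq d/2\}$, which by standard $\chi^2$ concentration satisfies $\bbP[\mcG] \geq 1 - 2\exp(-c_0 d)$, we get $|d\,x^\T M x| \leq 2\,|g^\T M g|$. Applying the classical Gaussian Hanson--Wright inequality (e.g.\ \cite{vershynin2018high}), which for traceless $M$ gives $\bbP[|g^\T M g| \geq t] \leq 2\exp\{-c\min(t^2/\|M\|_F^2,\, t/\|M\|_\op)\}$, together with a union bound over $\mcG^c$, yields
\begin{equation*}
    \bbP\big[|d\,x^\T M x| \geq u\big] \;\leq\; 2\exp(-c_0 d) \;+\; 2\exp\Big\{-c_1\min\Big(\tfrac{u^2}{\|M\|_F^2},\ \tfrac{u}{\|M\|_\op}\Big)\Big\}.
\end{equation*}

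Finally I would absorb the spurious $\exp(-c_0 d)$ term. The key observation is the deterministic bound $|d\,x^\T M x| \leq d\|M\|_\op$ (since $|x^\T M x| \leq \|M\|_\op\|x\|_2^2$), so the left-hand side vanishes unless $u \leq d\|M\|_\op$, in which case $u/\|M\|_\op \leq d$ and therefore $\min(u^2/\|M\|_F^2,\, u/\|M\|_\op) \leq d$. Thus whenever the probability is nonzero we have $\exp(-c_0 d) \leq \exp(-c_0\min(\cdots))$, so the two exponential terms collapse into one of the claimed form after adjusting the constant; in the remaining regime where the exponent is $O(1)$ one simply uses $\bbP[\cdot]\le 1$. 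The construction presents no real obstacle — the only points needing care are the decentering step, which is precisely what produces the $\Tr[M]$ appearing in the statement, and the cancellation of the dimension-dependent error coming from the fluctuations of $\|g\|_2$, handled by the trivial operator-norm bound above.
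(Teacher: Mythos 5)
Your proof is correct, but it follows a genuinely different route from the paper's. The paper proves this lemma as a one-line application of a general Hanson--Wright inequality for random vectors satisfying the convex concentration property (Adamczak, 2014): the vector $X=\sqrt{d}\,x$ with $x\sim\Unif(\mcS^{d-1})$ has that property by classical concentration on the sphere, and $\EE[X^\T M X]=\Tr[M]$, so the stated bound drops out directly. You instead reduce to the classical Gaussian Hanson--Wright inequality via $x=g/\|g\|_2$, and the two places where this reduction could go wrong are exactly the ones you handle: (1) the decentering $M_0=M-(\Tr[M]/d)\Id_d$ exploits $\|x\|_2^2=1$ to turn the trace subtraction into an exact identity, makes the Gaussian quadratic form $g^\T M_0 g$ mean-zero, and costs only constants since $\|M_0\|_F\le\|M\|_F$ and $\|M_0\|_\op\le 2\|M\|_\op$; and (2) the additive $2\exp(-c_0 d)$ error from the fluctuations of $\|g\|_2^2$ is absorbed because the event is empty unless $u\le d\|M_0\|_\op$, in which case the claimed exponent is itself at most $d$ (up to the constant), and the leading factor $4$ is reduced to $2$ by the standard constant-adjustment argument you sketch. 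The trade-off is clear: the paper's proof is shorter but rests on a less elementary black box, while yours is self-contained modulo two textbook facts (Gaussian Hanson--Wright and $\chi^2$ lower-tail concentration) at the price of a page of bookkeeping. Either argument is acceptable.
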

\noindent
\textbf{Remark --}
We prove Lemma~\ref{lemma:Hanson_Wright} as a consequence of a general Hanson-Wright inequality for random 
vectors satisfying a convex Lipschitz concentration property \cite{adamczak2014note}, easily satisfied by the Haar measure on $\mcS^{d-1}$.
We give details in Section~\ref{subsec:hanson_wright_sphere}.

\myskip 
Recall that $V = P Y \in \bbR^q$, with $P$ the orthogonal projector onto $\flatt(\Id_d)^\perp$, 
and that $t \in \bbR^q$.
If we identify $t$ with the corresponding element of $\bbR^p$ (or the corresponding $d \times d$ symmetric matrix),
then $\Tr[t] = 0$, and
$\langle V, t \rangle = \langle Y, t \rangle = x^\T t x - \Tr[t]/d = x^\T t x$ for $x \sim \Unif(\mcS^{d-1})$.
Using Lemma~\ref{lemma:Hanson_Wright} with $M = t$ gives: 
\begin{equation*}
    \bbP\Big[d |\langle V , t \rangle| \geq u\Big] \leq 2 \exp\Bigg\{-C \min\Bigg(\frac{u^2}{\|t\|_2^2}, \frac{u}{\|t\|_\op}\Bigg)\Bigg\}.
\end{equation*}
It is now classical to deduce the moments from the tails: 
\begin{align*}
    d^k \|\langle V, t \rangle\|_k^k &= \int_0^\infty k u^{k-1} \bbP[d|\langle V, t \rangle| \geq u] \rd u, \\ 
    &\leq 2 k \int_0^\infty u^{k-1} \exp\Bigg\{-C \min\Bigg(\frac{u^2}{\|t\|_2^2}, \frac{u}{\|t\|_\op}\Bigg)\Bigg\}\rd u, \\
    &\leq 2 k \int_0^{\|t\|_2^2/\|t\|_\op} u^{k-1} \exp\{-C u^2 / \|t\|_2^2\}\rd u + 2 k \int_{\|t\|_2^2/\|t\|_\op}^\infty u^{k-1} \exp\{-C u / \|t\|_\op\}\rd u, \\
    &\leq 2 k \int_0^{\infty} u^{k-1} \exp\{-C u^2 / \|t\|_2^2\}\rd u + 2 k \int_{0}^\infty u^{k-1} \exp\{-C u / \|t\|_\op\}\rd u, \\ 
    &\leq k C^{-k/2} \|t\|_2^k \Gamma\Big[\frac{k}{2}\Big] + 2k \Big(\frac{\|t\|_\op}{C}\Big)^k \Gamma(k), \\
    &\leq k \|t\|_2^k \Bigg\{ C^{-k/2} \Gamma\Big[\frac{k}{2}\Big] + 2 C^{-k}\Gamma(k)\Bigg\},
\end{align*}
since $\|t\|_\op \leq \|t\|_2 = \|t\|_F$. This is simply the sum of the sub-Gaussian and sub-exponential part of the tail given by Hanson-Wright's inequality.
Thus we have 
\begin{equation*}
    d \|\langle V, t \rangle\|_k \leq L k \|t\|_2, 
\end{equation*}
which is exactly eq.~\eqref{eq:to_check_condition_iii} for $\alpha = 1$.

\myskip 
Applying Corollary~\ref{thm:shahar} to $A = \sqrt{q}Z$ with $L, R = \infty, \alpha = 1, \gamma = 0, \delta = 3/d$, we reach that for 
all $\beta \geq 1$: 
\begin{equation*}
    \|Z^\T Z - \Id_n\|_\op \leq \frac{6}{d} + C_1(\beta) \Bigg(\sqrt{\frac{n}{d^2}} + \frac{n}{d^2}\Bigg),
\end{equation*}
with probability at least $1 - n^{-\beta} - 2 \exp(-c_0 n)$.
This implies eq.~\eqref{eq:to_show_HZ} and concludes the proof. \qedsymbol

\subsection{Proof of Lemma~\ref{lemma:Hanson_Wright}}\label{subsec:hanson_wright_sphere}

We use a generalization of Hanson-Wright's inequality (usually stated for i.i.d.\ sub-Gaussian vectors) which is due to \cite{adamczak2014note}.
\begin{definition}[Convex concentration property]\label{def:cvx_concentration}
    \noindent
    Let $n \geq 1$ and $X$ be a random vector in $\bbR^n$. We say that $X$ has the convex concentration property with constant $K$ if, for all 
    $\varphi : \bbR^n \to \bbR$ convex and $1$-Lipschitz, we have $\EE|\varphi(X)| < \infty$, and for every 
    $t > 0$: 
    \begin{equation*}
        \bbP[|\varphi(X) - \EE[\varphi(X)]| \geq t] \leq 2 \exp(-t^2/K^2).
    \end{equation*}
\end{definition}
\noindent
Note that if $X = \sqrt{d} x$, with $x \sim \Unif[\mcS^{d-1}]$, 
then $X$ satisfies Definition~\ref{def:cvx_concentration} for some absolute constant $K > 0$ (the function $\varphi$ does not even need to be convex), 
it is one of the most classical results of concentration of measure, cf.\ e.g.\ Theorem~5.1.4 of \cite{vershynin2018high}.
The main result of \cite{adamczak2014note} is the following:
\begin{proposition}[Hanson-Wright \cite{adamczak2014note}]\label{prop:hanson_wright}
    \noindent
    Let $n \geq 1$ and $X$ be a zero-mean vector in $\bbR^n$ that has the convex concentration property with constant $K$.
    Then for all symmetric $M \in \bbR^{n \times n}$ and $t > 0$: 
    \begin{equation*}
        \bbP[|X^\T M X - \EE(X^\T M X)| \geq t] \leq 2 \exp\Bigg(-C \min \Bigg(\frac{t^2}{2K^4 \|M\|_F^2}, \frac{t}{K^2 \|M\|_\op}\Bigg)\Bigg).
    \end{equation*}
\end{proposition}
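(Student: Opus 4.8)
The plan is to obtain a sub‑gamma bound on the centred exponential moment of $Q \coloneqq X^\T M X$ and then read off the tail in the usual way. By replacing $M$ with $-M$ (which changes neither $\|M\|_\op$ nor $\|M\|_F$) it suffices to control the upper tail $\bbP[Q - \EE Q \ge t]$. The one structural fact I would take from the convex concentration property is that for \emph{every} matrix $A$ the map $x \mapsto \|Ax\|_2$ is convex and $\|A\|_\op$‑Lipschitz, so $\|AX\|_2$ concentrates sub‑Gaussianly around its mean with parameter $\lesssim K\|A\|_\op$. Two consequences are used repeatedly: each linear form satisfies $\|\langle w, X\rangle\|_{\psi_2} \lesssim K\|w\|_2$, hence $\EE\exp(\theta\langle w, X\rangle) \le \exp(CK^2\theta^2\|w\|_2^2)$ for all $\theta$ (using $\EE X = 0$); and therefore $\Sigma \coloneqq \EE[XX^\T] \preceq CK^2\,\Id$.

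The target is the bound $\EE\exp(\mu(Q - \EE Q)) \le C\exp(C'K^4\mu^2\|M\|_F^2)$, valid for $|\mu| \le c/(K^2\|M\|_\op)$, and I would prove it by Gaussian linearisation. Decompose $M = M_+ - M_-$ into its positive and negative parts, so $M_\pm \succeq 0$, $\|M_\pm\|_\op \le \|M\|_\op$, $\|M_+\|_F^2 + \|M_-\|_F^2 = \|M\|_F^2$, and $Q = \|M_+^{1/2}X\|_2^2 - \|M_-^{1/2}X\|_2^2$. Applying the elementary identity $e^{a s^2/2} = \EE_g e^{\sqrt a\,gs}$ coordinatewise, with $g\sim\mcN(0,\Id)$ independent of $X$, gives $\EE_X\exp(\mu\|M_+^{1/2}X\|_2^2) = \EE_g\,\EE_X\exp(\sqrt{2\mu}\,\langle M_+^{1/2}g, X\rangle) \le \EE_g\exp(CK^2\mu\,g^\T M_+ g)$, and the right‑hand side is a genuine Gaussian chaos (independent coordinates): its moment generating function equals $\prod_i(1 - 2CK^2\mu\lambda_i^+)^{-1/2}$, which for $\mu < c/(K^2\|M\|_\op)$ is $\le \exp\big(CK^2\mu\Tr M_+ + C'K^4\mu^2\|M_+\|_F^2\big)$. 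A companion lower‑deviation estimate for $\|M_-^{1/2}X\|_2$ (from the same concentration property) bounds the contribution of the negative part; assembling the pieces and subtracting $\EE Q = \Tr(M\Sigma)$, the aim is that the linear terms cancel and only $O(K^4\mu^2\|M\|_F^2)$ (plus a negligible $O(K^2\mu\|M\|_\op)$) remains.

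The step I expect to be the main obstacle is exactly this cancellation of the \emph{linear} terms. The Gaussian‑chaos bound for $M_+$ contributes a coefficient of order $K^2\Tr M_+$, which in general is much larger than the $\Tr(M_+\Sigma)$ it is supposed to offset — the nuclear norm $\Tr M_+$ can dwarf $\|M\|_F$, and a careless treatment only yields a weaker bound with the nuclear norm of $M$ in place of $\|M\|_F^2$. Getting the sharp Frobenius dependence forces one to handle $M_+$ and $M_-$ jointly, exploiting $\Sigma \preceq CK^2\Id$ together with a two‑sided control of $\|M_\pm^{1/2}X\|_2$; this is the technical core of \cite{adamczak2014note}, whose argument I would follow. (The applications in this paper only require the isotropic case $\Sigma = \Id$, where $\EE Q = \Tr M$ and the bookkeeping simplifies.) Once the exponential‑moment bound is established, Markov's inequality gives $\bbP[Q - \EE Q \ge t] \le C\exp(-\mu t + C'K^4\mu^2\|M\|_F^2)$; choosing $\mu \asymp t/(K^4\|M\|_F^2)$ when $t \lesssim \|M\|_F^2/\|M\|_\op$ and $\mu \asymp 1/(K^2\|M\|_\op)$ otherwise produces the two‑regime estimate, and combining it with the bound for $-M$ restores the absolute value.
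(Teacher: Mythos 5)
The paper does not actually prove Proposition~\ref{prop:hanson_wright}: it is imported verbatim as the main theorem of \cite{adamczak2014note} and used as a black box, so there is no in-paper argument to compare yours against. Judged as a standalone blind proof, your proposal has a genuine gap, and it is precisely the one you flag yourself. The Gaussian linearisation $\EE_X\exp(\mu\|M_+^{1/2}X\|_2^2)\leq \EE_g\exp(cK^2\mu\, g^\T M_+ g)\leq \exp(cK^2\mu\Tr M_+ + c'K^4\mu^2\|M_+\|_F^2)$ is correct, but the resulting linear term $cK^2\mu\Tr M_+$ must be offset against $\mu\,\EE\|M_+^{1/2}X\|_2^2=\mu\Tr(M_+\Sigma)$, and the mismatch $(cK^2\Tr M_+ - \Tr(M_+\Sigma))$ is of nuclear-norm order. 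This is not controlled by $\|M\|_F$ or $\|M\|_\op$, and the problem does \emph{not} disappear in the isotropic case $\Sigma=\Id$: there $\EE Q=\Tr M$ carries coefficient $1$ while the linearised bound carries coefficient $cK^2$ with an unspecified absolute constant $c$, so an uncancelled remainder of order $K^2\mu\Tr M_+$ survives. Consequently this route only yields a deviation inequality valid above the scale $K^2\|M\|_{*}$ (nuclear norm), which is strictly weaker than the claimed two-regime Hanson--Wright bound. Your resolution --- ``this is the technical core of \cite{adamczak2014note}, whose argument I would follow'' --- outsources exactly the step that makes the theorem non-trivial, so the proof is not complete.

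A secondary issue: the ``companion lower-deviation estimate'' for the negative part has the same defect in the quadratic term. Concentration of the convex Lipschitz map $x\mapsto\|M_-^{1/2}x\|_2$ at scale $K\|M_-^{1/2}\|_\op$ gives, after squaring, a variance proxy of order $K^2\|M_-\|_\op\,\EE\|M_-^{1/2}X\|_2^2\lesssim K^4\,\|M_-\|_\op\Tr M_-$, and since $\|M\|_F^2\leq\|M\|_\op\Tr M$ for PSD matrices this again overshoots the Frobenius norm in general. The argument of \cite{adamczak2014note} is structured differently (it does not pass through the MGF of a linearised Gaussian chaos, but combines conditional sub-Gaussianity of linear forms with concentration of $\|AX\|_2$ in a way that avoids the nuclear-norm loss), so ``following it'' would replace, not complete, the strategy you sketched. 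For the purposes of this paper the honest options are either to cite the proposition as the paper does, or to reproduce Adamczak's proof in full; the linearisation sketch does neither.
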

\noindent
Applying Proposition~\ref{prop:hanson_wright} to the vector $X$ described above yields Lemma~\ref{lemma:Hanson_Wright}.

\subsection{Tail bounds for \texorpdfstring{$\chi^2$}{chi2} random variables}

The following is a useful tail bound on $\chi_d^2$ random variables, from \cite{laurent2000adaptive}. 
\begin{lemma}[Tail bounds for $\chi_d^2$]\label{lemma:tail_bound_chi_square}
    \noindent 
    Let $d \geq 1$, and $x_1, \cdots, x_d \iid \mcN(0,1)$. Let $z \coloneqq (1/d) \sum_{i=1}^d x_i^2$.
    Then for all $u \geq 0$: 
    \begin{equation*}
        \begin{dcases}
            \bbP\Bigg[z - 1 \geq 2 \sqrt{\frac{u}{d}} + 2 \frac{u}{d}\Bigg] &\leq \exp(-u), \\
            \bbP\Bigg[z - 1 \leq -2 \sqrt{\frac{u}{d}} \Bigg] &\leq \exp(-u).
        \end{dcases}
    \end{equation*} 
\end{lemma}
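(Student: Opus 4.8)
The statement is the standard sub-exponential concentration of a normalized $\chi^2_d$ variable due to Laurent and Massart, and the plan is to prove it by a Chernoff bound followed by optimization of the exponent. Writing $W \coloneqq \sum_{i=1}^d x_i^2 = dz$, so that $W \sim \chi^2_d$, I would first record the elementary moment generating function identity
\begin{equation*}
    \EE\big[e^{\lambda(x_i^2 - 1)}\big] = (1 - 2\lambda)^{-1/2} e^{-\lambda}, \qquad \lambda < \frac{1}{2},
\end{equation*}
so that by independence $\EE[e^{\lambda(W - d)}] = \big((1-2\lambda)^{-1/2} e^{-\lambda}\big)^d$ for $\lambda < 1/2$.

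For the upper tail, Markov's inequality applied to $e^{\lambda(W-d)}$ with $\lambda \in (0, 1/2)$ gives, for any $t > 0$,
\begin{equation*}
    \bbP[z \geq 1 + t] = \bbP[W - d \geq d t] \leq \exp\Big\{-\frac{d}{2}\big(2\lambda(1+t) + \log(1 - 2\lambda)\big)\Big\}.
\end{equation*}
Optimizing over $\lambda$, the minimizer is $\lambda = t/(2(1+t))$, which indeed lies in $(0, 1/2)$ for all $t > 0$, and substituting yields the ``rate function'' bound
\begin{equation*}
    \bbP[z \geq 1 + t] \leq \exp\Big\{-\frac{d}{2}\big(t - \log(1 + t)\big)\Big\}.
\end{equation*}
It then only remains to check that the choice $t = 2\sqrt{u/d} + 2u/d$ makes $\frac{d}{2}(t - \log(1+t)) \geq u$; writing $w \coloneqq \sqrt{u/d}$ so that $t = 2w + 2w^2$, this reduces to the elementary inequality $\log(1 + 2w + 2w^2) \leq 2w$, which follows from $e^{2w} \geq 1 + 2w + 2w^2$ for $w \geq 0$.

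For the lower tail I would argue symmetrically: apply Markov's inequality to $e^{-\lambda(W-d)}$ with $\lambda > 0$, using $\EE[e^{-\lambda(x_i^2 - 1)}] = (1 + 2\lambda)^{-1/2} e^{\lambda}$, optimize via $\lambda = s/(2(1-s))$ (valid for $s \in (0,1)$; the case $s \geq 1$ is trivial since $z \geq 0$), and obtain $\bbP[z \leq 1 - s] \leq \exp\{-\frac{d}{2}(-s - \log(1-s))\}$. With $s = 2\sqrt{u/d}$ the required bound $-\log(1-s) \geq s + s^2/2$ is immediate from the power series expansion of $-\log(1-s)$. There is no genuine obstacle here — the only care needed is to confirm that the optimal $\lambda$ stays in the admissible range and to dispatch the two one-variable function inequalities — so one may equally well simply invoke \cite{laurent2000adaptive} directly, which is what we do.
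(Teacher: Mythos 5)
Your proposal is correct: the paper itself gives no proof of this lemma and simply cites Laurent--Massart \cite{laurent2000adaptive}, which is exactly the fallback you end on, and your self-contained Chernoff derivation (the MGF identity, the optimal $\lambda=t/(2(1+t))$ resp.\ $\lambda=s/(2(1-s))$, and the two elementary inequalities $e^{2w}\geq 1+2w+2w^2$ and $-\log(1-s)\geq s+s^2/2$) is the standard argument and checks out in every detail, including the degenerate case $2\sqrt{u/d}\geq 1$ where the lower-tail event is empty.
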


\begin{corollary}[Tail bounds for $\tilde q$]\label{cor:tail_bound_qtilde}
    \noindent
    Let $d \geq 1$, and $x_1, \cdots, x_d \iid \mcN(0,1)$. Let $z \coloneqq (1/d) \sum_{i=1}^d x_i^2$, 
    and we denote $\tilde q \coloneqq 1/z - 1$.
    Notice that $\tilde q \geq -1$.
    Then, for all $t \in (0,1)$: 
    \begin{equation*}
            \bbP[|\tilde{q}| \geq t] \leq 2 \exp\Bigg(-\frac{d t^2}{16}\Bigg).
    \end{equation*} 
\end{corollary}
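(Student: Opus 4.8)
The plan is to reduce the two-sided tail of $\tilde q = 1/z - 1$ to the one-sided tails of $z$ supplied by Lemma~\ref{lemma:tail_bound_chi_square}. Write
\begin{equation*}
    \{|\tilde q| \geq t\} = \{1/z \geq 1+t\} \cup \{1/z \leq 1-t\},
\end{equation*}
(the second event being vacuous unless $t<1$, which is exactly the regime we are in), and bound each piece separately before applying a union bound.

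For the first event, $1/z \geq 1+t$ is equivalent to $z \leq 1/(1+t)$, and the elementary inequality $1/(1+t) \leq 1 - t/2$ for $t \in [0,1]$ (which rearranges to $t(1-t) \geq 0$) gives $z - 1 \leq -t/2$. Applying the lower-tail estimate of Lemma~\ref{lemma:tail_bound_chi_square} with $u$ chosen so that $2\sqrt{u/d} = t/2$, i.e.\ $u = dt^2/16$, yields $\bbP[1/z \geq 1+t] \leq \exp(-dt^2/16)$. For the second event, $1/z \leq 1-t$ is equivalent to $z \geq 1/(1-t) \geq 1+t$, hence $z - 1 \geq t$. With the same choice $u = dt^2/16$ one has $2\sqrt{u/d} + 2u/d = t/2 + t^2/8 \leq 5t/8 \leq t$ for $t \in (0,1)$, so $\{z-1 \geq t\}$ is contained in the event controlled by the upper-tail estimate of Lemma~\ref{lemma:tail_bound_chi_square}, giving $\bbP[1/z \leq 1-t] \leq \exp(-dt^2/16)$ as well. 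Summing the two contributions produces the claimed bound $2\exp(-dt^2/16)$.

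I expect no genuine obstacle here: the only points needing a moment's care are the two scalar inequalities $1/(1+t) \leq 1-t/2$ and $1/(1-t) \geq 1+t$, and the bookkeeping of the constant $16$, which is forced by matching $2\sqrt{u/d}$ to the smaller of the two target deviations (the $t/2$ coming from the $1/(1+t) \leq 1-t/2$ step). One should also record the trivial observation $\tilde q \geq -1$, which is consistent with the fact that for $t \geq 1$ the lower event is empty.
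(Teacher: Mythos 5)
Your proof is correct and follows essentially the same route as the paper's: split $\{|\tilde q|\geq t\}$ into the two one-sided events, translate each into a deviation of $z$, and invoke the two tails of Lemma~\ref{lemma:tail_bound_chi_square} with a choice of $u$ of order $dt^2$. The only difference is cosmetic --- you use the cleaner elementary bounds $1/(1+t)\leq 1-t/2$ and $1/(1-t)\geq 1+t$ with the single choice $u = dt^2/16$, whereas the paper picks $u$ separately for each tail --- and both yield the stated constant.
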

\begin{proof}[Proof of Corollary~\ref{cor:tail_bound_qtilde} --]
    We start with the upper tail $\tilde q \geq t$.
    Notice that $\tilde q \geq t \Leftrightarrow z \leq (1+t)^{-1}$.
    Using Lemma~\ref{lemma:tail_bound_chi_square} with $4 u = d [t / (1+t)]^2$, we have (using that $t < 1$):
    \begin{equation*}
        \bbP[\tilde q \geq t] \leq \exp\Bigg\{-\frac{d t^2}{4(1+t)^2} \Bigg\} \leq \exp\Bigg\{-\frac{d t^2}{16} \Bigg\}.
    \end{equation*}
    Similarly, for the lower tail, $\tilde q \leq -t \Leftrightarrow z \geq (1-t)^{-1}$.
    Using Lemma~\ref{lemma:tail_bound_chi_square} with $2 u = d [1 / (1-t) - \sqrt{(1+t)/(1-t)} ]$, we have (again using that $t \in (0,1)$):
    \begin{equation*}
        \bbP[\tilde q \leq -t] \leq \exp\Bigg\{-\frac{d}{2} \Bigg[\frac{1}{1-t} - \sqrt{\frac{1+t}{1-t}}\Bigg] \Bigg\} \leq \exp\Bigg\{-\frac{d t^2}{4}\Bigg\}.
    \end{equation*}
    This ends the proof.
\end{proof}

\subsection{Proof of Lemma~\ref{lemma:inverse_op}}\label{subsec:proof_lemma_inverse_op}

\begin{proof}
Note that $\lambda_{\mathrm{min}}(A) \geq \lambda_\mathrm{min}(B) - \eps$, so that $A \succ 0$ and $\|A^{-1}\|_\op \leq \|B^{-1}\|_\op / (1-\eps \|B^{-1}\|_\op)$.
We can use the standard estimate: 
\begin{equation*}
    \|A^{-1} - B^{-1}\|_\op =  \|B^{-1}(B - A)A^{-1}\|_\op \leq \|B^{-1}\|_\op \|A-B\|_\op \|A^{-1}\|_\op.
\end{equation*}
Using the remark above and the fact that $\|A - B\|_\op \leq \eps$ completes the proof.
\end{proof}

\subsection{Proof of Lemma~\ref{lemma:high_p_events}}\label{subsec:proof_high_p_events}

The probability bound for the event $E_2$ is the conclusion of Corollary~\ref{cor:concentration_Thetainv}, 
so we focus on the bound for $E_1$.
To control $\|U(a)\|_2$, we make use of the following tail bound \cite{talagrand1994supremum,hitczenko1997moment,adamczak2011restricted}.
\begin{lemma}[Tail of sum of i.i.d.\ sub-Weibull random variables \cite{adamczak2011restricted}]
    \label{lemma:tail_sum_sub_Weibull}
    \noindent 
    Let $q \in [1/2,1]$, and $W_1, \cdots, W_n$ be i.i.d.\ centered random variables satisfying $\bbP[|W_1| \geq t] \leq C_1 e^{-C_2 t^q}$.
    Then for all $t > 0$:
    \begin{equation*}
        \bbP \Big[\Big|\frac{1}{n}\sum_{\mu=1}^n W_\mu\Big| \geq t\Big] \leq 2 \exp\Big\{-C(q) \min(n t^2, (nt)^q)\Big\}.
    \end{equation*}
\end{lemma}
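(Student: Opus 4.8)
The plan is to recognize Lemma~\ref{lemma:tail_sum_sub_Weibull} as a Fuk--Nagaev / Bernstein-type tail inequality for sums of independent heavy-tailed variables and to prove it by the moment method. First I would pass from the sub-Weibull tail hypothesis to moment growth: integrating $\bbP[|W_1| \ge t] \le C_1 e^{-C_2 t^q}$ against $k\,t^{k-1}\,\rd t$ and applying Stirling's formula gives $\|W_1\|_{L_k} \le \kappa\, k^{1/q}$ for all $k \ge 1$, with $\kappa = \kappa(C_1,C_2,q)$; in particular $\sigma^2 \coloneqq \EE W_1^2$ is bounded by a constant depending only on $C_1,C_2,q$. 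Writing $S_n \coloneqq \sum_{\mu=1}^n W_\mu$ and $u \coloneqq nt$, the task becomes to show $\bbP[|S_n| \ge u] \le 2\exp(-C\min(u^2/n,\,u^q))$.

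Next I would invoke a Fuk--Nagaev inequality for sums of independent centered variables: for every $p \ge 2$,
\[
    \bbP[|S_n| \ge u] \;\le\; C_p\,\frac{n\,\EE|W_1|^p}{u^p} \;+\; 2\exp\!\Big(-\frac{c\,u^2}{n\,\sigma^2}\Big),
\]
with $C_p = (1+2/p)^p \le e^2$ bounded. (Equivalently one may apply Rosenthal's --- or Lata{\l}a's --- moment inequality $\|S_n\|_{L_p} \lesssim \sqrt{pn}\,\sigma + p\,n^{1/p}\|W_1\|_{L_p}$ and then Markov's inequality, or simply cite the bound in the form it appears in \cite{talagrand1994supremum,hitczenko1997moment,adamczak2011restricted}.) Since $\sigma$ is a constant depending only on $C_1,C_2,q$, the second term already supplies a factor $\exp(-c'\,u^2/n)$, valid for all $u$.

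It then remains to optimize the free exponent $p$ in the first term: inserting $\EE|W_1|^p \le (\kappa p^{1/q})^p$ turns it into $C_p\,n\,(\kappa p^{1/q}/u)^p$, which is minimized at $p \asymp (u/\kappa)^q$ and is then $\lesssim n\,\exp(-c\,u^q)$, valid as soon as this optimal $p$ exceeds $2$, i.e.\ once $u$ is larger than a constant depending only on $C_1,C_2,q$. Taking the better of the two terms in each regime --- the ``sub-Gaussian range'' $u \le n^{1/(2-q)}$, where $\min(u^2/n,u^q) = u^2/n$ and the Gaussian term suffices, and the ``large-deviation range'' $u > n^{1/(2-q)}$, where $\min = u^q$ and the moment term is used --- then gives $2\exp(-C\min(u^2/n,u^q)) = 2\exp(-C\min(nt^2,(nt)^q))$, which is the claim. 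The only genuinely delicate point, which I expect to be the main obstacle, is this last gluing: one must check that in the large-deviation range the superfluous polynomial prefactor $n$ in $n\exp(-c\,u^q)$ is absorbed into the exponent --- it is, because $u > n^{1/(2-q)}$ forces $u^q \ge n^{q/(2-q)}$, which dominates $\log n$ for $n$ past a fixed threshold, while the finitely many smaller $n$ (and the range where $u$ is itself $\mcO(1)$) are dealt with by shrinking $C(q)$ until the right-hand side already exceeds $1$. One then verifies the two regimes combine under a single choice of constants; everything else reduces to the routine tail-to-moment and Markov estimates indicated above.
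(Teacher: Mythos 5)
The paper does not actually prove Lemma~\ref{lemma:tail_sum_sub_Weibull}: it is imported verbatim from \cite{adamczak2011restricted} (Lemma~3.7 and eq.~(3.7) there), which in turn rests on the sharp moment estimates of \cite{hitczenko1997moment} for sums of symmetric Weibull variables. Your proposal therefore goes further than the paper by sketching a self-contained argument, and its architecture --- tails $\Rightarrow$ moments $\|W_1\|_{L_k}\lesssim k^{1/q}$, a Fuk--Nagaev/Rosenthal-type bound on $S_n=\sum_\mu W_\mu$, optimization of the free moment order $p$, and a case split at $u=nt=n^{1/(2-q)}$ with absorption of the polynomial prefactor into the exponent --- is exactly the standard route by which such statements are proved. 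Your handling of the prefactor $n$ (using $u^q\geq n^{q/(2-q)}\gg\log n$ in the large-deviation range, and noting that whenever the bound is non-trivial one has $u\gtrsim\sqrt{n}$, hence $u^q\gtrsim n^{q/2}$) and of the trivial range where the right-hand side can be pushed above $1$ by shrinking $C(q)$ is correct.

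Two of the auxiliary inequalities are not quite in the form you state, and this is where the details need repair. First, in the classical Fuk--Nagaev inequality the constant in the Gaussian term is \emph{not} absolute: the usual statement reads $\bbP[S_n\geq u]\leq (1+2/p)^p\,n\,\EE|W_1|^p/u^p+\exp\big(-c\,u^2/(e^{p}(p+2)^2 n\sigma^2)\big)$, so once you take $p\asymp u^q$ the Gaussian term is worthless. In the large-deviation range this is harmless (there $u^2/n\geq u^q$, so you only need the moment term), but in the sub-Gaussian range you cannot simultaneously take $p$ large for the moment term and keep a $p$-free Gaussian exponent from the \emph{same} application of Fuk--Nagaev; a truncation-plus-Bernstein argument also runs into trouble in the intermediate range $n^{(1+q)/(2+q)}\lesssim u\lesssim n^{1/(2-q)}$. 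Second, the Rosenthal bound as you wrote it, $\|S_n\|_{L_p}\lesssim\sqrt{pn}\,\sigma+p\,n^{1/p}\|W_1\|_{L_p}$, has second term of order $p^{1+1/q}n^{1/p}$, and optimizing $(\|S_n\|_{L_p}/u)^p$ then only yields $\exp(-c\,u^{q/(1+q)})$, strictly weaker than the claimed $\exp(-c\,u^{q})$. The clean fix, and the one actually underlying \cite{adamczak2011restricted}, is to replace Rosenthal by the Gluskin--Kwapie\'n / Hitczenko--Montgomery-Smith--Oleszkiewicz estimate $\|S_n\|_{L_p}\leq C(q)(\sqrt{pn}+p^{1/q})$, valid for all $p\geq 2$ when the summands are (symmetrized) $\psi_q$ with $q\leq 1$; Markov with $p=c\min(u^2/n,u^q)$ then gives the lemma in one stroke, uniformly in $u$, with no gluing of regimes and no stray prefactor. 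With that substitution your outline becomes a complete proof.
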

\noindent
Lemma~\ref{lemma:tail_sum_sub_Weibull} is a generalization of Bernstein's inequality for $\psi_q$ tails, with $q \in [1/2,1]$.
This lemma is stated in \cite{adamczak2011restricted}, see Lemma~3.7 and eq.~(3.7) there, and is a classical consequence 
of the same result for symmetric Weibull random variables \cite{hitczenko1997moment}.

\myskip 
We fix $a \in \mcS^{d-1}$. Note that:
\begin{equation*}
    \|U(a)\|_2^2 = \sum_{i=1}^n \langle \omega_i, a \rangle^4.
\end{equation*}
Since $\langle \omega_i, a \rangle \deq (\omega_i)_1$ by rotation invariance of the Haar measure on $\mcS^{d-1}$, it is easy to check that 
$\EE[\langle \omega_i, a \rangle^4] = (3/d^2)\cdot d/(2+d) \leq 3/d^2$.
Moreover, we have for all $t \geq 0$ \cite{vershynin2018high}:
\begin{equation*}
    \bbP[\langle \omega_i, a \rangle^4 \geq t]\leq 2 \exp\{-C d \sqrt{t}\}.
\end{equation*}
Therefore, applying Lemma~\ref{lemma:tail_sum_sub_Weibull} and using the union bound (recall $N \leq 5^d$), we get: 
\begin{equation*}
    \bbP\Bigg[\sup_{j \in [N]}\|U(a_j)\|_2^2 \geq \frac{3n}{d^2} + t\Bigg] \leq 2 \exp\Bigg\{d \log 5 -C \min\Bigg(\frac{d^4 t^2}{n}, d \sqrt{t}\Bigg)\Bigg\}.
\end{equation*}
Taking e.g.\ $t = (2 \log 5 / C)^2$, and since $d^4/n = \omega(d)$, we reach the conclusion.

\subsection{Proof of Lemma~\ref{lemma:truncation_centering}}\label{subsec:proof_truncation_centering}

Note that $\tilde q_i = 1 / d_i - 1 \deq d/\chi_d^2 - 1$. 
We let $r_i \coloneqq \tilde q_i | A_i$.
The $A_i$ are independent, and by Corollary~\ref{cor:tail_bound_qtilde}, $\bbP[A_i] \geq 1 - 2 \exp(-d/16)$. 
By the law of total expectation and the union bound, we thus have: 
\begin{equation}\label{eq:truncation_1}
    \bbP\Bigg[\max_{j \in [N]}\Bigg|\sum_{i=1}^n (\Theta^{-1} \tilde q)_i \langle a_j, \omega_i \rangle^2 \Bigg|\geq \frac{1}{2} \Bigg] 
    \leq 
    \bbP\Bigg[\max_{j \in [N]}\Bigg|\sum_{i=1}^n (\Theta^{-1} r)_i \langle a_j, \omega_i \rangle^2 \Bigg|\geq \frac{1}{2} \Bigg] 
    + 2n e^{-d/16}.
\end{equation}
Since $\tilde q_i \geq -1$, for all $x \in \bbR$: $\bbP[r_i \leq x] = \bbP[\tilde q_i \leq x \wedge 1] / \bbP[\tilde q_i \leq 1]$, 
and thus for all $x \in (0,1)$, by Corollary~\ref{cor:tail_bound_qtilde}:
\begin{equation*}
    \begin{dcases}
        \bbP[r_i \geq x] &\leq \bbP[\tilde q_i \geq x] \leq 2 e^{-dx^2/16}, \\ 
        \bbP[r_i \leq -x] &\leq \frac{\bbP[\tilde q_i \leq -x]}{1 - 2 e^{-d/16}} \leq 4 e^{- dx^2/16}.
    \end{dcases}
\end{equation*}
Moreover, $\bbP[|r_i| > 1] = 0$.
$r_i$ are thus i.i.d.\ sub-Gaussian random variables, with sub-Gaussian norm smaller than $K / \sqrt{d}$.
Moreover, 
by the law of total expectation: 
\begin{equation*}
    \EE[\tilde q_i] = \EE[r_i] \bbP(A_i) + \EE[\tilde q_i \indi\{|\tilde q_i| \geq 1\}],
\end{equation*}
so that since $\bbP[A_i] \geq 1 - 2 e^{-d/16}$, and using Cauchy-Schwarz: 
\begin{align*}
    |\EE[\tilde q_i] - \EE[r_i]| &\leq |\EE[r_i]| \cdot 2e^{-d/16} + \sqrt{2}\EE[\tilde q_i^2]^{1/2} e^{-d/32}, \\
     &\aleq 2 e^{-d/16} + C e^{-d/32} /\sqrt{d},
\end{align*}
using in $(\rm a)$ that $|r_i| \leq 1$ and that $\EE[\tilde q_i^2]^{1/2} \leq C/\sqrt{d}$.
Since $\EE \tilde q_i = 2 / (d-2)$, we get 
\begin{equation*}
    |\EE r_i| \leq \frac{3}{d}.
\end{equation*}
Recall that $y_i = r_i - \EE r_i$.
Therefore we have, for all $a \in \mcS^{d-1}$: 
\begin{align*}
   \Bigg|\sum_{i=1}^n [\Theta^{-1}(y - r)]_i \langle \omega_i, a \rangle^2\Bigg| &\leq 
   \|\EE r\|_2\|\Theta^{-1}\|_\op \|U(a)\|_2, \\
    &\leq \frac{3 \sqrt{n} }{d} \|\Theta^{-1}\|_\op \| U(a)\|_2.
\end{align*}
Using Lemma~\ref{lemma:high_p_events}, it is clear that if $n \leq \alpha d^2$ for $\alpha = \alpha(\beta) > 0$ small enough, we have 
\begin{equation}\label{eq:truncation_2}
    \bbP\Bigg[\max_{j \in [N]}\Bigg|\sum_{i=1}^n (\Theta^{-1} r)_i \langle a_j, \omega_i \rangle^2 \Bigg|\geq \frac{1}{2} \Bigg] 
    \leq 
    \bbP\Bigg[\max_{j \in [N]}\Bigg|\sum_{i=1}^n (\Theta^{-1} y)_i \langle a_j, \omega_i \rangle^2 \Bigg|\geq \frac{1}{4} \Bigg] 
    + C n^{-\beta}.
\end{equation}
Combining eqs.~\eqref{eq:truncation_1} and eq.~\eqref{eq:truncation_2} gives the sought result.
Finally, $(y_i)_{i=1}^n$ are i.i.d.\ centered sub-Gaussian random variables with sub-Gaussian norm $K / \sqrt{d}$. $\qed$

\subsection{Proof of Lemma~\ref{lemma:condition_y}}\label{subsec:proof_condition_y}

Let $M \in \mcS_n$, and denote $z \coloneqq M y$.
By Hoeffding's inequality, for all $i \in [n]$: 
\begin{equation*}
    \bbP[|z_i| \geq t] \leq 2 \exp\Bigg\{-\frac{Cdt^2}{\|M_i\|_2^2}\Bigg\}
    \leq 2 \exp\Bigg\{-\frac{Cdt^2}{\|M\|_\op^2}\Bigg\},
\end{equation*}
with $(M_i)_{i=1}^n$ the rows of $M$,
since $\|M\|_\op \geq \max_{i \in [n]} \|M_i\|_2$.
Thus by the union bound: 
\begin{equation*}
    \bbP[\|z\|_\infty \geq t] \leq 
    2 n \exp\Bigg\{-\frac{Cdt^2}{\|M\|_\op^2}\Bigg\}.
\end{equation*}
Letting $t = C \|M\|_\op d^{-3/8}$ ends the proof. $\qed$

\subsection{Proof of Lemma~\ref{lemma:control_I}}\label{subsec:proof_control_I}

 Let $q \in [1/2,1]$.
Recall that
\begin{equation*}
    \sum_{i \notin S(\eta)} \langle \omega_i , a \rangle^4
    = \sum_{i = 1}^n \langle \omega_i , a \rangle^4 \indi\{| \langle \omega_i, a \rangle| \leq \eta\}
\end{equation*}
We let $z_i \coloneqq \langle \omega_i , a \rangle^4 \indi\{| \langle \omega_i, a \rangle| \leq \eta\}$. 
They are i.i.d.\ random variables, with $\EE[z_i] \leq \EE[\langle \omega_i, a \rangle^4] \leq 3/d^2$, 
and for all $t \geq 0$:
\begin{align*}
    \bbP[z_i \geq t] &\leq \min \Big[2 e^{-Cd \sqrt{t}}, \indi\{t^{1/4} \leq \eta\}\Big], \\
    &\leq 2 \exp\Big\{-C d \eta^{2 - 4 q} t^q\Big\}.
\end{align*}
Consequently $z'_i = z_i d^{1/q} \eta^{2/q - 4}$ satisfy $\bbP[z'_i \geq t]\leq 2\exp\{-C t^{q}\}$.
We use again Lemma~\ref{lemma:tail_sum_sub_Weibull} to get:
\begin{equation*}
    \bbP\Bigg[\sum_{i=1}^n z_i \geq n \EE[z_i] +  n  d^{-1/q} \eta^{-2/q + 4} t \Bigg] \leq 
    2 \exp\{- C_q \min(nt^2, (nt)^q)\}.
\end{equation*}
This last inequality can be rewritten as, for all $v \geq 0$:
\begin{equation*}
    \bbP\Bigg[\sum_{i=1}^n z_i \geq \frac{n}{d^2} (3+v) \Bigg] \leq 
    2 \exp\Big\{- C_q \min\Big(n d^{-4+2/q} \eta^{4/q-8} v^2, n^q d^{1-2q} \eta^{2-4q} v^q\Big)\Big\}.
    \qed
\end{equation*}

\myskip 
\textbf{Acknowledgements --}
The authors are grateful to Tim Kunisky, from whom they learned about this problem, and to Joel Tropp for insightful discussions.
We would also like to thank anonymous referees for useful suggestions.

\bibliographystyle{alpha}
\bibliography{refs}

\end{document}